\newcommand{\tF}{\tilde{F}}
\newcommand{\ol}{\overline}
\newcommand{\ul}{\underline}
\newtheorem{theorem}{Theorem}[section]
\newtheorem{lemma}[theorem]{Lemma}
 \theoremstyle{definition}
\newtheorem{definition}[theorem]{Definition}
\theoremstyle{remark}
\newtheorem{remark}[theorem]{Remark}
\numberwithin{equation}{section}
\begin{document}

\title[Dirichlet Problem]
{The Dirichlet problem for prescribed curvature equations of $p$-convex hypersurfaces}

\author{Weisong Dong}
\address{School of Mathematics, Tianjin University, 135 Yaguan Road,
Jinnan, Tianjin, China, 300354}
\email{dr.dong@tju.edu.cn}


\begin{abstract}

In this paper, we study the Dirichlet problem for $p$-convex hypersurfaces with prescribed curvature.
We prove that there exists a graphic hypersurface satisfying the prescribed curvature equation with homogeneous boundary condition.
An interior curvature estimate is also obtained.


\emph{Keywords:}  Dirichlet problem; $C^2$ estimates; interior curvature estimates, $p$-convex hypersurfaces.

\end{abstract}

\maketitle

\section{Introduction}

In this paper, we are interested in finding a graphic hypersurface of prescribed curvature on a bounded domain $\Omega \subset \mathbb{R}^{n}$.
Suppose that the hypersurface
$M_u := \{(x, u(x) ): x\in \Omega\} \subset \mathbb{R}^{n+1}$ is given by the graph of
a smooth function $u:\Omega \rightarrow \mathbb{R}$. Denote by $\kappa[M_u] = (\kappa_1, \cdots, \kappa_n)$ the principal curvatures of $M_u$
with respect to the downward unit normal of $M_u$.
Given an integer $p$, where $1\leqslant p\leqslant n$, it is interesting in geometry analysis to find
a graphic hypersurface $M_u$ with its principal curvatures satisfying the following equation
\begin{equation}
\label{eqn}
\Pi_{1\leqslant i_1 < \cdots < i_p\leqslant n} (\kappa_{i_1} + \cdots + \kappa_{i_p}) = f (x, u, \nu)\;  \mbox{in}\; \Omega,
\end{equation}
and with the prescribed homogeneous boundary condition as below
\begin{equation}
\label{eqn-b}
u= 0\; \mbox{on}\; \partial \Omega,
\end{equation}
where $\nu$ is the upward unit normal vector to the graphic hypersurface at $X := (x, u(x))$ and $f(x,z,\nu) > 0$ is a smooth function defined on $\Omega\times \mathbb{R} \times \mathbb{R}^n$.
For convenience, we denote that
$\mathcal{M}_u^p := \Pi_{1\leqslant i_1 < \cdots < i_p\leqslant n} (\kappa_{i_1} + \cdots + \kappa_{i_p})$.
We also simply write $\mathcal{M}_u$ when there is no ambiguity.

The equation \eqref{eqn} is a fully nonlinear partial differential equation for $p < n$.
It is natural to consider the problem in the class of $p$-convex hypersurfaces
so that the equation is elliptic.
Recall that
a $C^2$ regular hypersurface $M_u$ is called (strictly) $p$-convex if
$\kappa[M_u] (x)$ satisfies, at each point $x \in \Omega$, that
\[
\kappa_{i_1} + \cdots + \kappa_{i_p} \geqslant \;(>)\; 0, \: \forall \;  1\leqslant i_1 < \cdots < i_p\leqslant n.
\]
Accordingly, we say that the $C^2$ function $u$ is \emph{admissible}.
The notion of $p$-convexity can be dated back to Wu \cite{Wu}.
Sha in \cite{Sha1,Sha2} studied Riemannian manifolds with $p$-convex boundaries and proved a complete characterization for such
Riemannian manifolds.
Recently, there were some new discoveries in $p$-convex geometry made by Harvey and Lawson.
We refer the reader to \cite{HL09,HL12} for these results and also to a nice survey \cite{HL13}.

There is a vast literature on the existence of closed Weingarten hypersurfaces of codimension one in the Euclidean space.
The Gausssian curvature case corresponding to $p=1$ in \eqref{eqn} was studied by Oliker \cite{Oliker}.
The mean curvature equation corresponding to $p=n$ in \eqref{eqn} was studied by Bakelman-Kantor \cite{BK} and Treibergs-Wei \cite{TreW}.
Caffarelli-Nirenberg-Spruck in \cite{CNS4} was concerned with prescribed curvature equations
of very general type, and it was then generalized to Riemannian manifolds by Gerhardt in \cite{Gerhardt}.
The existence of a starshaped $(n-1)$-convex hypersurface of prescribed curvature \eqref{eqn} was proved by Chu-Jiao \cite{CJ}.
The case $p\geqslant \frac{n}{2}$ was explored in a subsequent work by the author in \cite{Dong}.
In complex settings, when $p=n-1$, the operator appears in
the Gauduchon conjecture which was
solved by Sz\'ekelyhidi-Tosatti-Weinkove \cite{S-T-W}.
Some previous work on this topic can be found in Tosatti-Weinkove \cite{TW1,TW2} and Fu-Wang-Wu \cite{FWW1,FWW2}.
A counterpart to \eqref{eqn} that was studied extensively is the higher order mean curvature equation
\begin{equation}
\label{sigmak}
\sigma_k (\kappa) = f(X, \nu(X)),\; \forall X \in M,
\end{equation}
where $1\leqslant k \leqslant n$ and $\sigma_k$ is the $k$-th elementary symmetric function.
In warped product manifolds, Chen-Li-Wang \cite{CLW} established the curvature estimate
for convex hypersurfaces satisfying \eqref{sigmak}.
Jin-Li \cite{JL} and Li-Oliker \cite{LO} investigated the starshaped hypersurfaces in hyperbolic and elliptic space
respectively.
For the study in Minkowski space, see Li \cite{LiAnmin}, Wang-Xiao \cite{WX} and Ren-Wang-Xiao \cite{RWX}.
We also refer the reader to Guan-Zhang \cite{GZ} for a class of curvature equations arising
from convex geometry, and to Guan-Lin-Ma \cite{GLM} and Guan-Li-Li \cite{GLL} for hypersurfaces with prescribed curvature measures.

The Dirichlet problem for Weingarten hypersurfaces has also attracted attention of many authors, 
due to its role in solving a Plateau type problem for locally convex Weingarten hypersurfaces.
For its own sake, there still are many works, such as Caffarelli-Nirenberg-Spruck \cite{CNS5}, Lions \cite{Lions}, Ivochkina \cite{I1,I2}, Ivochkina-Lin-Trudinger \cite{ILT}, Lin-Trudinger \cite{LT} and Trudinger \cite{Trudinger-arma}.
We remark that in \cite{ILT} the Dirichlet problem with non-homogeneous boundary data was considered for smooth solutions
and in \cite{Trudinger-arma} it was investigated in the context of viscosity solutions.
In this paper, we are focused on the Dirichlet problem of homogeneous boundary data.
We shall return to the non-homogeneous boundary case in future work.
Now,
we recall the $p$-convex cones introduced by Harvey and Lawson \cite{HL13}.
\begin{definition}
\label{def}
Let $p\in \{1, \cdots, n\}$. The $\mathcal{P}_p$ cone is a subset in $\mathbb{R}^n$ with element
$(\lambda_1, \cdots, \lambda_n)$ such that $\lambda_{i_1} + \cdots + \lambda_{i_p} > 0$
for all $1\leqslant i_1 < \cdots < i_p\leqslant n$.
The $P_p$ cone of symmetric $n \times n$ matrices is defined as:
$A \in P_p$ if the $n$-tuple of the eigenvalues of matrix $A$ is in $\mathcal{P}_p$.
We call $A$ is $p$-positive if $A\in P_p$.
\end{definition}
Denote by $(\kappa_1^b, \cdots, \kappa_{n-1}^b)$ the principal curvatures of the boundary $\partial \Omega$ taken
with respect to the exterior unit normal to $\partial \Omega$ (see Section 14.6 in \cite{GT}).
We call $\Omega$ is \emph{strictly convex} if that
$(\kappa_1^b(x), \cdots, \kappa_{n-1}^b(x)) \in \mathcal{P}_{1} \subset \mathbb{R}^{n-1}$ for every $x\in \partial \Omega$.
Now, we state our main result.
\begin{theorem}
\label{thm1}
Let $\Omega$ be a strictly convex bounded domain in $\mathbb{R}^n$ with smooth boundary $\partial \Omega$
and $f = f (x, z, \nu) \in C^{\infty} (\bar \Omega \times \mathbb{R} \times \mathbb{S}^n)$ be a positive function with $f_z \geqslant 0$.
Suppose that $p\geqslant \frac{n}{2}$.
Assume that there is an admissible subsolution $\ul{u} \in C^2(\bar \Omega)$ satisfying
\begin{equation}
\label{subsolution}
\mathcal{M}_{\ul u}^p \geqslant f (x, \ul u, \ul \nu) \;\mbox{in}\; \Omega \;\; \mbox{and}\;\;  \ul u = 0 \; \mbox{on}\; \partial \Omega,
\end{equation}
where $\ul \nu$ denotes the upward unit normal vector to the graphic hypersurface $M_{\ul u}$ at $\ul X := (x, \ul u(x))$.
Then there exists a unique admissible solution $u\in C^\infty (\bar \Omega)$ to \eqref{eqn} and \eqref{eqn-b}.
\end{theorem}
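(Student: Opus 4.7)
\emph{Continuity method.} The plan is to deform the subsolution $\ul u$ into a solution of \eqref{eqn}--\eqref{eqn-b} via a one-parameter family. A convenient choice is
\begin{equation*}
\log \mathcal{M}^p_{u^t} = t\log f(x, u^t, \nu^t) + (1-t)\log \mathcal{M}^p_{\ul u} \quad \mbox{in}\; \Omega, \qquad u^t = 0 \;\mbox{on}\; \partial\Omega,
\end{equation*}
with $u^0 = \ul u$ solving the problem at $t=0$. Let $T\subset[0,1]$ be the set of $t$ for which an admissible $u^t\in C^{2,\alpha}(\bar\Omega)$ exists; openness follows from the implicit function theorem in H\"older spaces, since the linearization of $\log \mathcal{M}^p$ is uniformly elliptic on $\mathcal{P}_p$ and the sign condition $f_z\geqslant 0$ makes the linearization invertible with zero Dirichlet data. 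Closedness reduces to $t$-uniform a priori $C^{2,\alpha}(\bar\Omega)$ bounds.

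\emph{$C^0$ and $C^1$ estimates.} The lower bound $\ul u \leqslant u^t$ is immediate from comparison in the $\mathcal{P}_p$ cone, since $\ul u$ remains a subsolution along the family and agrees with $u^t$ on $\partial\Omega$; the upper bound $u^t\leqslant 0$ follows likewise upon testing against the zero function and using $f_z\geqslant 0$. These two-sided barriers, both vanishing on $\partial\Omega$, give a uniform bound on $|\partial_\nu u^t|$ along $\partial\Omega$ and hence the boundary gradient bound. The interior gradient bound is then obtained by the standard maximum principle argument applied to an auxiliary function such as $|Du^t|^2 e^{\phi(u^t)}$ for a suitable convex $\phi$, using that $\log\mathcal{M}^p$ is concave and elliptic on $\mathcal{P}_p$.

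\emph{$C^2$ estimates and the main obstacle.} This is the heart of the proof. Boundary $C^2$ bounds are handled in the usual two stages: tangential and mixed tangential-normal second derivatives are controlled by flattening $\partial\Omega$ and constructing barriers of the form $v = (\ul u - u^t) + t d - N d^2$, where $d$ is the distance to $\partial\Omega$ and the strict convexity of $\Omega$ provides positivity of the relevant boundary terms. The double normal derivative $u^t_{\vn\vn}$ is then bounded by exploiting the specific multilinear structure of $\mathcal{M}^p_u$, showing that degeneration would force the boundary values of the eigenvalues out of $\mathcal{P}_p$. The interior $C^2$ estimate is provided by the interior curvature estimate announced in the paper; here the hypothesis $p\geqslant n/2$ enters in an essential way, as it is precisely what yields enough concavity/convexity of $\log\mathcal{M}^p$ (and the right sign in the commutator terms when differentiating the equation twice) to run a maximum-principle argument on the largest principal curvature. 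The boundary double-normal estimate is the step I expect to be hardest, since it has to exploit a delicate interplay between the cone $\mathcal{P}_p$ and the boundary geometry, whereas the interior bound, though nontrivial, is quarantined as an input from the separate interior curvature estimate.

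\emph{Conclusion.} With a uniform $C^2(\bar\Omega)$ bound in hand, Evans--Krylov theory (applicable because $\log\mathcal{M}^p$ is concave on $\mathcal{P}_p$ for $p\geqslant n/2$) upgrades to a uniform $C^{2,\alpha}(\bar\Omega)$ bound, and Schauder bootstrapping yields $C^\infty$ estimates. Thus $T$ is closed, $T=[0,1]$, and the solution at $t=1$ is the desired admissible smooth solution. Uniqueness is immediate from the comparison principle for $\log\mathcal{M}^p$ together with $f_z\geqslant 0$.
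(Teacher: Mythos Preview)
Your overall architecture (continuity method, reduction to a priori $C^2$ estimates, Evans--Krylov, Schauder) matches the paper, and the $C^0$/$C^1$ steps are fine. However, your account of the $C^2$ step contains several misidentifications that matter.

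First, the concavity of $\tilde F=F^{1/C_n^p}$ (equivalently of $\log\mathcal{M}^p$) on $\mathcal{P}_p$ holds for \emph{all} $1\leqslant p\leqslant n$; it is not where $p\geqslant n/2$ enters. In the paper the restriction $p\geqslant n/2$ is used solely in Lemma~\ref{key-1}: when one differentiates the equation twice, the dependence of $f$ on $\nu$ produces a bad term of the form $-Cv_{11}$ (see \eqref{diff-eqn-2} and \eqref{S2-3}), and one needs $C\sum F^{ii}\geqslant \lambda_1$ in the regime $\kappa_n\geqslant -\delta\kappa_1$ to absorb it. That inequality is exactly what fails for small $p$. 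Your phrase ``enough concavity/convexity \ldots\ and the right sign in the commutator terms'' does not capture this mechanism, and Evans--Krylov does not require $p\geqslant n/2$.

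Second, you have the difficulty of the boundary $C^2$ estimates backwards. The double-normal bound is the \emph{easy} step here: once tangential and mixed derivatives are bounded, strict convexity of $\partial\Omega$ places $(\kappa_1^b,\dots,\kappa_{n-1}^b)\in\mathcal{P}_1$, and Lemma~1.2 of \cite{CNS3} together with Lemma~\ref{growth} forces $u_{nn}$ to stay bounded. The delicate step is the mixed tangential--normal estimate. The barrier you propose, $(\underline u-u)+td-Nd^2$, is the standard CNS-type barrier, but the paper does not use it; instead it follows Ivochkina and works with
\[
W=u_\alpha+\rho_\alpha u_n-\tfrac{1}{2}\sum_{\beta<n}u_\beta^2,
\]
proving the nonlinear differential inequality of Lemma~\ref{Ivo} (which requires a careful case analysis splitting on the size of $\sum_{\beta<n}\eta_{\beta i}^2$), and then pairs it with the barrier $\Psi$ of \eqref{barrier}. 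The subsolution-based barrier you suggest would need $G^{ij}(\underline u-u)_{ij}$ to dominate the error terms coming from $G^sW_s$ and the gradient dependence of $f$, and the paper's argument indicates that this is not automatic for this operator.

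Finally, the ``interior curvature estimate announced in the paper'' gives $(-u)^\beta|h|\leqslant C$, which degenerates at $\partial\Omega$; for the Dirichlet problem one uses instead the \emph{global} reduction $\sup_\Omega|D^2u|\leqslant C(1+\sup_{\partial\Omega}|D^2u|)$ of Section~3.1, whose proof again hinges on Lemmas~\ref{key-1} and \ref{lem4} to handle the bad third-order terms.
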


\begin{remark}
The endpoint case $p=n$ is excluded from our discussion, since this case belongs to the realm of quasilinear elliptic equations.
The case $p=1$ is also well known.
Jiao-Sun \cite{JS} proved the theorem for $p=n-1$ recently.
It remains an interesting question to prove the theorem for $1< p < \frac{n}{2}$.
\end{remark}

The approach to solve the fully nonlinear elliptic equation \eqref{eqn} is the well-known
continuity method and degree theory. So the main task of this paper is to establish \emph{a priori} estimates for
admissible solutions up to second order derivatives as higher order estimates follow from the Evans-Krylov theorem and Schauder theory.
By the assumption of the existence of a subsolution, $C^0$ and $C^1$ estimates are easy to derive.
The most difficult part is to establish \emph{a priori} $C^2$ estimates, including the global $C^2$ estimate
and the boundary $C^2$ estimate.
Moreover, since $f$ depends on $\nu$, the bad term $-Cv_{11}$ appears in the inequality \eqref{S2-3}
when applying the maximum principle to the test function $G$ in Section 3,
which arise from differentiating the equation \eqref{eqn} twice (see \eqref{diff-eqn-2}).
This also cause trouble for us to deal with third order terms.
To overcome these difficulties, one can impose various assumptions on $f$ as in \cite{I1,I2,GG,GJ}.
Due to the special nature of the operator of equation \eqref{eqn},
we use idea from \cite{CJ,Dong} to control $-Cv_{11}$ by good terms
without imposing any extra conditions on $f$.
Then the bad third order terms can be eliminated by the good term $-F^{1i,i1}$.
See Lemma \ref{key-1} and Lemma \ref{lem4}.
As to the boundary $C^2$ estimate, the issue is to estimate the tangential-normal derivatives of solutions.
Fortunately, we can use idea from Ivochkina \cite{I1} to tackle this.
Actually, we have enough good third order terms to derive an interior curvature estimate.
\begin{theorem}
Let $\Omega$ be a bounded domain in $\mathbb{R}^n$.
Suppose that $u\in C^4 (\Omega) \cap C^2 (\bar \Omega)$ is an admissible solution to \eqref{eqn} and \eqref{eqn-b}.
Then there is a positive constant $C$ and $\beta$ such that the second fundamental form $h$ of $M_u$ satisfies
\begin{equation}
\sup_{ \Omega} (-u)^\beta|h| \leqslant C,
\end{equation}
where $C$ and $\beta$ depend on $n$, $p$, $|u|_{C^1}$, $\inf \tilde f$ and $|f|_{C^2}$.
\end{theorem}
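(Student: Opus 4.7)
The plan is to apply the maximum principle to a test function of the form
\[
W = \log h_{11} + \log(-u)^\beta + \psi(|\nabla u|^2),
\]
where $h_{11}$ denotes the largest eigenvalue of the second fundamental form of $M_u$, $\beta>0$ is a parameter to be fixed, and $\psi$ is an auxiliary (convex) function of $|\nabla u|^2$ of standard type. Since $u=0$ on $\partial\Omega$, the cutoff $(-u)^\beta$ vanishes on the boundary, so $W$ attains its supremum at an interior point $x_0\in\Omega$. Work in an orthonormal frame that diagonalizes $h_{ij}$ at $x_0$ with $h_{11}\geqslant\cdots\geqslant h_{nn}$, and perturb $h_{11}$ in the standard way so that it is smooth near $x_0$. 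At $x_0$ one has $W_i=0$ and $F^{ii}W_{ii}\leqslant 0$, where $F^{ij}=\partial \mathcal{M}^p/\partial h_{ij}$.

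Differentiating equation \eqref{eqn} once and twice at $x_0$ produces the identities for $F^{ii}h_{iij}$ and $F^{ii}h_{ii11}$. Substituting into $F^{ii}W_{ii}\leqslant 0$ gives an inequality that contains (i) the ``concavity defect'' $F^{ij,kl}h_{ij1}h_{kl1}$, (ii) the bad first-order term $-Cv_{11}$ already isolated in the paper as the obstruction in \eqref{S2-3}, (iii) the ``cutoff'' contributions $\beta F^{ii}u_{ii}/u$ and $\beta^2 F^{ii}u_i^2/u^2$, and (iv) gradient terms coming from $\psi$. The first critical step is to exploit the product structure of $\mathcal{M}^p$ for $p\geqslant n/2$, as in Lemma \ref{key-1} and Lemma \ref{lem4}: this yields simultaneously a genuinely positive third-order term (beyond the Gerhardt--Andrews off-diagonal contribution coming from $-F^{1i,i1}$) and a control on $-Cv_{11}$ by these good terms, without imposing any extra hypothesis on $f$. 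The second step is to use $W_i=0$ to eliminate $h_{11i}/h_{11}$ in favor of $\beta u_i/u+\psi'\cdot (|\nabla u|^2)_i$; the resulting ``mixed'' terms of the form $\beta F^{ii}h_{ii1}u_i/(uh_{11})$ can be estimated by Cauchy's inequality against the good third-order slack, provided $\beta$ is taken small.

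The third and most delicate step is to digest the terms with $1/u$ and $1/u^2$. Using $u<0$ and the $C^1$ bound on $u$, one rewrites $\beta F^{ii}u_{ii}/u$ via the graph relation $h_{ii}=u_{ii}/v+(\text{gradient terms})$, which produces $\beta F^{ii}h_{ii}/u$; this is controlled by the homogeneity of $\mathcal{M}^p$ and Newton--Maclaurin type inequalities in terms of $f/u$, hence of $1/u$ alone. The quadratic term $\beta^2 F^{ii}u_i^2/u^2$ is dominated either by $\beta^2 \mathcal{F}/u^2$ (where $\mathcal{F}=\sum F^{ii}$) or, after choosing $\psi$ appropriately, by a positive term involving $\sum F^{ii}$ that came out of the $\psi$ contribution. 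Collecting everything, the inequality at $x_0$ reads schematically
\[
c\,h_{11}\leqslant \frac{C}{(-u)^2}+\frac{C\beta}{-u}h_{11}^{1/p_0}
\]
for some exponent $p_0>1$ depending on $n,p$. For $\beta$ chosen small enough (depending only on $n,p,|f|_{C^2},\inf\tilde f,|u|_{C^1}$) this forces an upper bound on $(-u)^\beta h_{11}$ at $x_0$, and hence everywhere in $\Omega$.

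The main obstacle is calibrating $\beta$, $\psi$ and the various Cauchy splittings so that all three sources of negativity---the genuine concavity defect, the $-Cv_{11}$ term, and the cutoff perturbation $\beta F^{ii}u_i^2/u^2$---are absorbed simultaneously by the single reservoir of good third-order terms produced by the $\mathcal{M}^p$ structure. Because the third-order slack scales linearly in $h_{11}$ while the cutoff contributions scale in $h_{11}/(-u)$ and $1/(-u)^2$, a positive $\beta$ necessarily survives in the final exponent, which is why the estimate is of the form $(-u)^\beta|h|\leqslant C$ rather than a full interior bound independent of the distance to $\partial\Omega$.
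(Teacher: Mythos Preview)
Your framework is correct in outline---maximum principle applied to a test function with the cutoff $(-u)^\beta$, and the use of Lemmas~\ref{key-1} and~\ref{lem4} to harvest the third-order slack peculiar to $\mathcal{M}^p$---but the direction in which you tune $\beta$ is backwards, and this is a genuine gap rather than a matter of taste. You argue for $\beta$ small so that the cross-terms $\beta F^{ii}h_{ii1}u_i/(uh_{11})$ and the quadratic $\beta^2 F^{ii}u_i^2/u^2$ are cheap to absorb. But the $1/u^2$ singularity does not disappear: your own schematic inequality already contains $C/(-u)^2$ on the right, so at the maximum point you obtain only $h_{11}(x_0)\leqslant C/(-u(x_0))^2$. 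Feeding this back into the test function with weight $(-u)^\beta$ gives $(-u(x_0))^\beta h_{11}(x_0)\leqslant C(-u(x_0))^{\beta-2}$, which is unbounded for $\beta<2$ since $x_0$ is not controlled away from $\partial\Omega$. The loop does not close.

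The paper instead takes $\beta$ \emph{large}. The mechanism is that the critical equation \eqref{diff1-interior} lets one solve for $u_i/u$ in terms of $v_{11i}/v_{11}$, $(1/w)_i/(1/w-a)$ and lower order, each carrying a factor $1/\beta$; consequently the dangerous cutoff contribution $-\beta\tilde F^{ii}(u_i/u)^2$ for $i\geqslant 2$ reappears with coefficient $3/\beta$ and is absorbed by the $\epsilon$-slack left over from Lemma~\ref{lem4} once $\beta\gg 1/\epsilon$. Only the $i=1$ piece survives with the full $C\beta^2\tilde F^{11}/u^2$, and that is beaten by the good quadratic $\tilde F^{11}v_{11}^2$ coming from the $(1/w-a)^{-1}$ factor (your $\psi$ would have to reproduce this term), yielding $u^2v_{11}^2\leqslant C$ at $x_0$; since $-u$ is bounded above, the test function with this large $\beta$ is then uniformly bounded. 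You also omit the dichotomy $\kappa_n\lessgtr -\delta\kappa_1$: Lemmas~\ref{key-1} and~\ref{lem4} apply only when $\kappa_n\geqslant -\delta\kappa_1$, and the complementary case is dispatched separately using Lemma~\ref{Dinew1}(4), which gives $\tilde F^{nn}\kappa_n^2\geqslant \theta\delta^2\kappa_1^2\sum\tilde F^{ii}$ directly.
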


Some applications of the interior curvature estimate can be found in Sheng-Urbas-Wang \cite{SUW}.
The above theorem holds for $p\geqslant \frac{n}{2}$ if $f \equiv f (x, u ,\nu)$ and
for $1\leqslant p \leqslant n$ if $f \equiv f(x,u)$
as explained below.
The assumption $p\geqslant \frac{n}{2}$ in Theorem \ref{thm1} is only used to prove Lemma \ref{key-1}
to control $-Cv_{11}$ in the inequality \eqref{S2-4} (and in \eqref{In-S2-2} for the interior curvature estimate).
If the right hand side function $f$ in the equation \eqref{eqn} does not depend on $\nu$,
the term $-Cv_{11}$ does not appear in \eqref{S2-4} (also not appear in \eqref{In-S2-2}),
for which we refer the reader to examine the inequalities \eqref{diff-eqn-2} and \eqref{S2-3}. Therefore, we have the following result.

\begin{theorem}
\label{thm2}
Let $\Omega$ be a strictly convex bounded domain in $\mathbb{R}^n$ with smooth boundary $\partial \Omega$
and $f = f (x, z) \in C^{\infty} (\bar \Omega \times \mathbb{R})$ be a positive function with $f_z \geqslant 0$.
Suppose $1\leqslant p \leqslant n$.
Assume that there is an admissible subsolution $\ul{u} \in C^2(\bar \Omega)$ satisfying
\begin{equation}
\label{subsolution-2}
\mathcal{M}_{\ul u}^p \geqslant f (x, \ul u) \;\mbox{in}\; \Omega \;\; \mbox{and}\;\;  \ul u = 0 \; \mbox{on}\; \partial \Omega,
\end{equation}
where $\ul \nu$ denotes the upward unit normal vector to the graphic hypersurface $M_{\ul u}$ at $\ul X := (x, \ul u(x))$.
Then there exists a unique admissible solution $u\in C^\infty (\bar \Omega)$ to \eqref{eqn} and \eqref{eqn-b}.
\end{theorem}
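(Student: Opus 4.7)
The plan is to follow the proof of Theorem \ref{thm1} verbatim and observe that the restriction $p \geqslant \tfrac{n}{2}$ was invoked in only one specific spot, which can now be bypassed. Existence is reduced to a priori $C^0$, $C^1$ and $C^2$ bounds for admissible solutions of \eqref{eqn}--\eqref{eqn-b} via the continuity method and Leray-Schauder degree theory, with higher regularity supplied by the Evans-Krylov theorem and Schauder estimates; uniqueness is immediate from the maximum principle applied to the linearized operator, which is elliptic on the admissible cone $P_p$ by $p$-convexity. The $C^0$ estimate and the boundary gradient estimate follow in the standard way by sandwiching $u$ between $\ul u$ from below and a barrier built from the strict convexity of $\Omega$ and admissibility from above. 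Since $u \equiv 0$ on $\partial \Omega$, the gradient of $u$ attains its maximum on the boundary, so the global $C^1$ bound reduces to the boundary gradient bound.

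For the global $C^2$ estimate, I would differentiate \eqref{eqn} twice and apply the maximum principle to the same test function $G$ used in Section 3 of the proof of Theorem \ref{thm1}. The crucial observation, and the reason the hypothesis $p \geqslant \tfrac{n}{2}$ can now be dropped, is exactly the one made by the author immediately after the statement: the term $-C v_{11}$ in \eqref{S2-4} that forced that hypothesis in Theorem \ref{thm1} originates from the $\nu$-derivatives of $f$ in \eqref{diff-eqn-2}, so when $f = f(x,z)$ this term simply does not appear, Lemma \ref{key-1} is no longer needed, and the remaining negative third-order contributions are absorbed by the good term $-F^{1i,i1}$ exactly as in Lemma \ref{lem4}. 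The global $C^2$ estimate therefore goes through for every $1 \leqslant p \leqslant n$.

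The boundary $C^2$ estimate transfers from Theorem \ref{thm1} with no modification: pure tangential second derivatives come from twice differentiating $u|_{\partial \Omega} = 0$ together with the strict convexity of $\partial \Omega$ and admissibility of $\ul u$; the pure normal second derivative is extracted from \eqref{eqn} itself once the remaining components of the Hessian are controlled; and the mixed tangential-normal second derivatives, which I expect to be the main technical obstacle, are controlled by the Ivochkina-type barrier construction from \cite{I1} cited in the paper. Since $\nu$-dependence of $f$ is never used in any of these steps, no restriction on $p$ arises at the boundary either. With the full $C^2$ bound in hand the continuity path connecting, say, $\mathcal M^p_{\ul u}$ to $f$ is both open and closed, the degree at the initial endpoint is nonzero because $\ul u$ (or a small admissible perturbation) is a solution there, and we conclude the existence of $u \in C^{\infty}(\bar \Omega)$.
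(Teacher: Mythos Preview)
Your outline matches the paper's own justification exactly: the only place $p \geqslant n/2$ enters the proof of Theorem~\ref{thm1} is through Lemma~\ref{key-1}, invoked solely to absorb the $-Cv_{11}$ term in \eqref{S2-4}, and that term is produced by the $\nu$-derivatives of $f$ in \eqref{diff-eqn-2} and \eqref{S2-3}; once $f=f(x,z)$ it vanishes and the remainder of the argument (Lemma~\ref{lem3}, Lemma~\ref{lem4}, and the boundary estimate of Section~4) goes through for every $p$.

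One step in your sketch is not correct as written. The assertion ``since $u\equiv 0$ on $\partial\Omega$, the gradient of $u$ attains its maximum on the boundary'' does not follow from the boundary condition alone, and $p$-convexity for $p>1$ is too weak to force it (the monotonicity-of-directional-derivatives trick only works when $u$ is genuinely convex). The reduction of the interior gradient bound to the boundary bound requires an actual maximum-principle computation; the paper carries this out in Section~5 with the test function $Q=|Du|\,e^{Au}$, and that argument uses only Lemma~\ref{Dinew1}~(2) and~(4), so it too is valid for all $1\leqslant p\leqslant n$. Replace your sentence with a reference to that computation and the proposal is complete.
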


Before we end the introduction, let's review a conjecture on curvature estimates
for hypersurfaces with prescribed higher order mean curvature as it is closely related to our topic.
The Gaussian curvature and the mean curvature correspond to $k=n$ and $k=1$ in \eqref{sigmak} respectively.
Ren-Wang conjectured that the curvature estimate holds for $k> \frac{n}{2}$ in \cite{RW2,RW3}.
While the case $k=n$ proved a long time ago by Caffarelli-Nirenberg-Spruck \cite{CNS1},
it was only recently that Ren-Wang  \cite{RW1,RW2} proved the conjecture for $k\geqslant n-2$.
Guan-Ren-Wang in \cite{GRW} established the curvature estimate when $k=2$,
and in the same paper they also proved
the curvature estimate for convex solutions of equation \eqref{sigmak}
when $3 \leqslant k \leqslant n$.

The paper is organized as follows. In Section 2, we recall some properties of the operator from \cite{Dinew}
and prove some key lemmas which are crucial to the global and boundary $C^2$ estimate.
Then, we prove the global and boundary $C^2$ estimate in Section 3 and Section 4 respectively.
In Section 5, we provide a proof of the gradient estimate.

\textbf{Acknowledgements}:
The author is supported by the National Natural Science Foundation of China, No.
11801405.

\section{Preliminaries}

Let $M$ be a hypersurface in $\mathbb{R}^{n+1}$ given by the graph of $u : \Omega \rightarrow \mathbb{R}$.
Then the induced metric $g$ and second fundamental form $h$ of $M$ are given by
\[
g_{ij} = \delta_{ij} + u_iu_j \;\mbox{and}\; h_{ij} = \frac{u_{ij}}{\sqrt{1+ |Du|^2}}.
\]
The upward unit normal vector is
\[
\nu = \frac{(-Du,1)}{\sqrt{1+ |Du|^2}}.
\]
The principal curvatures $\kappa = (\kappa_1, \cdots, \kappa_n)$ of the hypersurface $M$ are the eigenvalues of $h_{ij}$
with respect to $g_{ij}$, i.e. the eigenvalues of the following matrix
\begin{equation}
\label{matrix'}
\frac{1}{w} \Big(I - \frac{Du\otimes Du}{w^2}\Big) D^2 u,
\end{equation}
where $w = \sqrt{1+ |Du|^2}$ and $g^{ij} = \delta_{ij} - \frac{u_iu_j}{w^2}$ is the inverse of $g_{ij}$.
Note that the above matrix generally is not symmetric.
As in \cite{CNS5}, we consider the following matrix
\begin{equation}
\label{matrix}
a_{ij} = \frac{1}{w} \gamma^{ik} u_{kl} \gamma^{lj},
\end{equation}
where $\gamma^{ik} = \delta_{ik} - \frac{u_i u_k}{w(1+w)}$ is the inverse matrix of $\gamma_{ij} = \delta_{ij} + \frac{u_i u_j}{1+w}$
and $\gamma_{ij}$ is the square root of $g_{ij}$.


For convenience, we introduce the following notations
\[
F (a_{ij}) := F (\kappa) = \Pi_{1\leqslant i_1 < \cdots < i_p\leqslant n} (\kappa_{i_1} + \cdots + \kappa_{i_p}) \; \mbox{and} \; \tilde F = F^{\frac{1}{C_n^p}},
\]
where $C_n^p = \frac{n!}{p! (n-p)!}$.
Equation \eqref{eqn} then can be written as
\begin{equation}
\label{eqn'}
\tilde F (a_{ij}) := \tilde F(\kappa) = \tilde f(x,u, \nu),
\end{equation}
where $\kappa = (\kappa_1, \cdots, \kappa_n)$ and $\tilde f = f^{ \frac{1}{C_n^p} }$.
It is also convenient to rewrite the equation as below
\begin{equation}
\label{G-eqn}
G(D^2 u, Du) := \tilde F( a_{ij} ) = \tilde f,
\end{equation}
where $a_{ij}$ is defined in \eqref{matrix}.
Denote
\[
F^{ij} = \frac{\partial F}{\partial a_{ij}}, \;
F^{ij,kl} = \frac{\partial^2 F}{\partial a_{ij} \partial a_{kl} },\;
G^{ij} = \frac{\partial G}{\partial u_{ij}},\;
G^{i} = \frac{\partial G}{\partial u_i}.
\]
We remark that $G$ is concave and homogeneous one with respect to $D^2u$ since
$\tilde F$ is concave and homogeneous one with respect to $a_{ij}$. See \cite{Dinew}.
The equation is elliptic as the matrix $\{ \frac{\partial \tF}{\partial a_{ij}} \}$ is positive definite for $\{a_{ij}\} \in P_p$.
It is easy to verify that
\[
\frac{1}{w} \sum \tF^{ii} \geqslant \sum G^{ii} \geqslant \frac{1}{w^3} \sum \tF^{ii}.
\]
By a straightforward calculation, one get the following lemma. See formula (2.21) in Lemma 2.3 of \cite{GS}.
\begin{lemma}[\cite{GS}]
\label{GS}
We have
\begin{equation}
\label{GS-equality}
G^s = - \frac{u_s}{w^2} \sum_i \frac{\partial \tF}{\partial \kappa_i} \kappa_i
- \frac{2}{w(1+w)} \sum_{ij} \frac{\partial \tF }{\partial a_{ij}} a_{it} (wu_t \gamma^{sj} + u_j\gamma^{ts}).
\end{equation}
\end{lemma}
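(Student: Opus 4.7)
The plan is to compute $G^s = \partial G/\partial u_s$ directly from the definition $G(D^2u, Du) = \tilde F(a_{ij})$ with $a_{ij} = w^{-1}\gamma^{ik} u_{kl} \gamma^{lj}$, using the chain rule and the specific structure of the factors $w$ and $\gamma^{ik}$, which are the only parts of $a_{ij}$ that depend on $Du$.

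First I would write
\[
G^s = \tilde F^{ij} \frac{\partial a_{ij}}{\partial u_s}
= \tilde F^{ij} \Bigl(-\frac{1}{w^2}\frac{\partial w}{\partial u_s}\gamma^{ik}u_{kl}\gamma^{lj}
+ \frac{1}{w}\frac{\partial \gamma^{ik}}{\partial u_s}u_{kl}\gamma^{lj}
+ \frac{1}{w}\gamma^{ik}u_{kl}\frac{\partial \gamma^{lj}}{\partial u_s}\Bigr),
\]
and then substitute $\partial w/\partial u_s = u_s/w$ together with the derivative of $\gamma^{ik} = \delta_{ik} - \frac{u_i u_k}{w(1+w)}$, which is a routine but careful calculation. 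The first of the three pieces already reproduces $-\frac{u_s}{w^2} a_{ij}$ inside the $\tilde F^{ij}$ contraction; applying the Euler relation for the $1$-homogeneous function $\tilde F$, namely $\tilde F^{ij}a_{ij} = \sum_i \frac{\partial \tilde F}{\partial \kappa_i}\kappa_i$, produces exactly the first term on the right-hand side of \eqref{GS-equality}.

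For the remaining two pieces I would rewrite $\frac{1}{w}u_{kl}\gamma^{lj} = \gamma_{ik'} a_{k'j}$ (and analogously for the other factor) so that after contracting with $\tilde F^{ij}$ the factor $a_{it}$ appears naturally. The key algebraic identity is the $Du$-derivative of $\gamma^{ik}$, which, after expanding $\partial(1/(w(1+w)))/\partial u_s$ and using the relation $w^2 = 1 + |Du|^2$, can be repackaged so that the two symmetric contributions combine into the single expression $-\frac{2}{w(1+w)}\tilde F^{ij}a_{it}(wu_t\gamma^{sj} + u_j \gamma^{ts})$. The two summands inside the parentheses correspond, respectively, to the $\delta_{is}u_k$ and $u_i\delta_{ks}$ contributions in $\partial\gamma^{ik}/\partial u_s$, together with the companion terms coming from differentiating the denominator $w(1+w)$.

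The main obstacle is therefore not any conceptual difficulty but the bookkeeping: one must collect the various $u_iu_ku_s$-type terms that come from $\partial\gamma^{ik}/\partial u_s$ and show that they telescope into the compact form in \eqref{GS-equality}. I would double-check by verifying the two standard consistency conditions, namely that the right-hand side is symmetric in the roles of the two $\gamma$ factors and that it reduces to the expected formula in the model case $\tilde F = \tr$ (so $a_{ij} = H_{ij}$ is the mean-curvature-type contraction), where \eqref{GS-equality} can be independently verified from the graph formula for mean curvature. Once the two blocks are in place, the identity \eqref{GS-equality} follows by direct comparison.
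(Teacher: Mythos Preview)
Your approach is correct and is precisely the ``straightforward calculation'' the paper alludes to; the paper itself does not give any details and simply cites formula~(2.21) in Lemma~2.3 of \cite{GS}. One small index slip: in your relation $\frac{1}{w}u_{kl}\gamma^{lj}=\gamma_{ik'}a_{k'j}$ the subscript $i$ should be $k$, i.e.\ $\frac{1}{w}u_{kl}\gamma^{lj}=\gamma_{kk'}a_{k'j}$, but your subsequent description makes clear you are using it correctly.
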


Next, we calculate the derivatives of $F$.
The calculations are carried out at a point $X_0$ on the hypersurface $M_u$
where $a_{ij}$ is diagonal.
Note that $F^{ij} $ is also diagonal at $X_0$ and we have the following formulas
\[
F^{kk} = \frac{\partial F}{\partial \kappa_k} =
\sum_{k\in\{i_1, \cdots, i_p\} } \frac{F (\kappa)}{\kappa_{i_1} + \cdots + \kappa_{i_p} },
\]
for which we refer to Lemma 1.10 in \cite{Dinew}.
We have formulas for the second order derivatives of $F$ at $X_0$ as below
\[
F^{kk,ll}= \frac{\partial^2 F}{\partial \kappa_k \partial \kappa_l} =
\sum_{ \substack{ k\in\{i_1, \cdots, i_p\} \\ l\in \{j_1, \cdots, j_p\} \\ \{i_1, \cdots, i_p\} \neq \{j_1, \cdots, j_p\}} }
\frac{F (\kappa)}{ (\kappa_{i_1} + \cdots + \kappa_{i_p}) (\kappa_{j_1} + \cdots + \kappa_{j_p}) },
\]
and, for $k\neq r$,
\[
F^{kr, r k} = \frac{F^{k k} - F^{r r} }{\kappa_k - \kappa_r}
=
- \sum_{ \substack{ k\notin\{i_1, \cdots, i_p\} \ni r  \\ r\notin \{j_1, \cdots, j_p\} \ni k
\\ \{i_1, \cdots, i_p\} \backslash \{r\} = \{j_1, \cdots, j_p\} \backslash \{k\}
}  }
\frac{ F(\kappa) }{( \kappa_{i_1} + \cdots + \kappa_{i_p} )(\kappa_{j_1} + \cdots + \kappa_{j_p})}.
\]
Otherwise, we have $F^{ij,kl} = 0$.
See Lemma 1.12 in \cite{Dinew} for the above formulas, or see \cite{Dong}. These formulas can also be easily obtained from Theorem 5.5 in \cite{B}.
The following properties of the function $F$ which are very similar to the properties of $\sigma_k$ were proved by Dinew \cite{Dinew}.

\begin{lemma}[\cite{Dinew}]
\label{Dinew1}
Suppose that the diagonal matrix $A = diag(\lambda_1, \cdots, \lambda_n)$ belongs to $P_p$ and
that $\lambda_1 \geqslant \cdots \geqslant \lambda_n$.
Then,
\begin{itemize}
\item [(1)] $\tF^{11} (A) \lambda_1 \geqslant \frac{1}{n} \tF (A);$
\item [(2)] $\sum_{k=1}^n \tF^{kk} (A) \geqslant p;$
\item [(3)] $\sum_{k=1}^n F^{kk} (A) \lambda_{k} = C_n^p F(A);$
\item [(4)] there is a constant $\theta = \theta(n, p)$ such that $F^{jj} (A) \geqslant \theta \sum F^{ii}$ for all  $j \geqslant n-p+1.$
\end{itemize}
\end{lemma}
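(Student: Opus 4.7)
The plan is to derive all four statements from the explicit formulas for $F^{kk}$ recalled just before the lemma, using only Euler's identity for homogeneous functions and the arithmetic-geometric mean inequality. Part (3) is immediate: $F$ is a product of $C_n^p$ linear factors in $\kappa$, hence homogeneous of degree $C_n^p$, and Euler's relation yields $\sum_k F^{kk}\lambda_k = C_n^p F$ at once. Part (1) reduces, via $\tilde F^{11} = (C_n^p)^{-1} F^{1/C_n^p - 1} F^{11}$, to the claim $\lambda_1 F^{11} \geq (C_n^p/n) F$; using the explicit expression for $F^{11}$ as a sum over $p$-subsets containing the index $1$ and the fact that $\lambda_1$ is the largest eigenvalue (so $\kappa_{i_1}+\cdots+\kappa_{i_p} \leq p\lambda_1$), each of the $C_{n-1}^{p-1}$ summands contributes at least $F/p$, and the identity $C_{n-1}^{p-1}/p = C_n^p/n$ finishes this part.

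For part (2) I would combine two observations. First, $\sum_k F^{kk} = pF\sum_{K} 1/(\sum_{k \in K}\lambda_k)$, since each $p$-subset $K$ is counted once per index $k \in K$. Second, AM--GM on the positive sums $\sum_{k\in K}\lambda_k$ (positivity follows from $\lambda \in \mathcal{P}_p$) gives
\[
\sum_K \frac{1}{\sum_{k \in K}\lambda_k} \geq C_n^p \Bigl(\prod_K \frac{1}{\sum_{k \in K}\lambda_k}\Bigr)^{1/C_n^p} = \frac{C_n^p}{\tilde F}.
\]
Multiplying through by $pF$ and dividing by $C_n^p F^{1-1/C_n^p}$ produces $\sum_k \tilde F^{kk} \geq p$.

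The deepest statement is part (4). I would first establish the monotonicity $F^{jj} \geq F^{ii}$ whenever $\lambda_j \leq \lambda_i$ by a swap bijection on $p$-subsets $K$ with $i \in K$, $j \notin K$, sending $K$ to $K' = (K \setminus \{i\}) \cup \{j\}$: the terms with both $i, j \in K$ cancel in $F^{ii} - F^{jj}$, while $\sum_{k \in K'}\lambda_k \leq \sum_{k \in K}\lambda_k$ makes each swapped contribution to $F^{jj}$ dominate the corresponding one to $F^{ii}$. Under $\lambda_1 \geq \cdots \geq \lambda_n$ this yields the chain $F^{11} \leq \cdots \leq F^{nn}$. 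To promote monotonicity to the quantitative bound for $j \geq n-p+1$, I would run the swap in reverse: every $p$-subset $K$ not containing such a $j$ must meet $\{1,\ldots,n-p\}$, because $\{n-p+1,\ldots,n\}\setminus\{j\}$ has only $p-1$ elements; picking $i_0 \in K \cap \{1,\ldots,n-p\}$ and forming $K' = (K \setminus \{i_0\}) \cup \{j\}$ produces a subset containing $j$ with smaller sum, hence larger reciprocal. The main obstacle is to quantify the overcounting of this map, extracting a combinatorial factor $C(n,p)$ with $\sum_K 1/(\sum_{k\in K}\lambda_k) \leq C(n,p) \sum_{K \ni j} 1/(\sum_{k\in K}\lambda_k)$; once achieved, combining with $\sum_i F^{ii} = pF\sum_K 1/(\sum_{k\in K}\lambda_k)$ and $F^{jj} = F\sum_{K \ni j} 1/(\sum_{k\in K}\lambda_k)$ delivers the desired $\theta = \theta(n,p) > 0$.
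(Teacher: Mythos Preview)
The paper does not supply its own proof of this lemma; it attributes the result to Dinew and points to the Appendix of the author's earlier paper \cite{Dong} for details. Your argument is correct and essentially complete: parts (1)--(3) are fully justified exactly as you wrote them, and the monotonicity $F^{11}\leqslant\cdots\leqslant F^{nn}$ via the swap bijection is the standard route.

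For part (4), the ``obstacle'' you flag is minor and easily closed. With $j\geqslant n-p+1$ fixed, make the map well-defined on $p$-subsets $K$ with $j\notin K$ by taking $i_0=i_0(K)$ to be the \emph{smallest} index in $K\cap\{1,\ldots,n-p\}$ and setting $K'=(K\setminus\{i_0\})\cup\{j\}$. Given any $K'\ni j$, a preimage must be of the form $(K'\setminus\{j\})\cup\{i_0\}$ with $i_0\in\{1,\ldots,n-p\}\setminus K'$, so there are at most $n-p$ preimages. Since $i_0\leqslant n-p<j$ forces $\lambda_{i_0}\geqslant\lambda_j$ and hence $\sum_{k\in K'}\lambda_k\leqslant\sum_{k\in K}\lambda_k$, one obtains
\[
\sum_{K\not\ni j}\frac{1}{\sum_{k\in K}\lambda_k}\;\leqslant\;(n-p)\sum_{K'\ni j}\frac{1}{\sum_{k\in K'}\lambda_k},
\]
and therefore $\sum_K 1/(\sum_{k\in K}\lambda_k)\leqslant(n-p+1)\sum_{K\ni j}1/(\sum_{k\in K}\lambda_k)$. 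Combined with your identities $F^{jj}=F\sum_{K\ni j}(\cdot)$ and $\sum_i F^{ii}=pF\sum_K(\cdot)$, this yields $F^{jj}\geqslant\theta\sum_i F^{ii}$ with the explicit constant $\theta=\frac{1}{p(n-p+1)}$.
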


The proof of the above Lemma can also be found in the Appendix of \cite{Dong}.
Next, we prove a few lemmas which were used to derive second order estimates.


\begin{lemma}
\label{growth}
For every $C>0$ and every compact set $K\subset\mathcal{P}_p$, there exists a number $R = R(C,K)$ such that
the inequality
\begin{equation}
\label{F5}
F(\lambda_1, \cdots, \lambda_n +R) \geqslant C
\end{equation}
holds for all $\lambda=(\lambda_1, \cdots, \lambda_n) \in K$.
\end{lemma}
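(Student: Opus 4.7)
The plan is to exploit the multiplicative structure of $F$: write
\[
F(\lambda_1,\ldots,\lambda_{n-1},\lambda_n+R)=\prod_{|S|=p}\Big(\sum_{i\in S}\lambda_i+R\cdot\mathbf{1}_{n\in S}\Big),
\]
the product being taken over all $p$-element subsets $S\subset\{1,\ldots,n\}$, and to split the factors according to whether $n\in S$ or $n\notin S$.

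First I would handle the factors with $n\notin S$. These are unchanged by the shift in the last coordinate, and each is of the form $\sum_{i\in S}\lambda_i$, which is strictly positive on $\mathcal{P}_p$. Since $K$ is a compact subset of the open cone $\mathcal{P}_p$, continuity gives a constant $c_0=c_0(K)>0$ with $\sum_{i\in S}\lambda_i\geqslant c_0$ for every such $S$ and every $\lambda\in K$. There are $\binom{n-1}{p}$ such factors, contributing at least $c_0^{\binom{n-1}{p}}$ to the product.

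Next I would handle the factors with $n\in S$; there are $\binom{n-1}{p-1}\geqslant 1$ of them, each of the form
\[
R+\lambda_n+\sum_{i\in S\setminus\{n\}}\lambda_i\geqslant R-pM,
\]
where $M=\max_{\lambda\in K}\max_i|\lambda_i|$ is finite by compactness. Thus if $R\geqslant 2pM$, every factor in this group is at least $R/2$, so their product is at least $(R/2)^{\binom{n-1}{p-1}}$.

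Combining the two estimates yields
\[
F(\lambda_1,\ldots,\lambda_{n-1},\lambda_n+R)\geqslant c_0^{\binom{n-1}{p}}\bigl(R/2\bigr)^{\binom{n-1}{p-1}},
\]
which tends to $+\infty$ as $R\to\infty$ uniformly in $\lambda\in K$; choosing $R=R(C,K)$ large enough so that the right-hand side exceeds $C$ finishes the proof. There is no real obstacle here beyond the bookkeeping of subsets: the key inputs are (i) openness of $\mathcal{P}_p$ together with compactness of $K$ to bound the unchanged factors from below, and (ii) boundedness of $K$ to ensure the shifted factors dominate linearly in $R$.
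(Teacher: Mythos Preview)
Your proof is correct and follows essentially the same approach as the paper: both split the product $F(\lambda_1,\ldots,\lambda_n+R)$ into factors with $n\in S$ and factors with $n\notin S$, bound the latter below by a positive constant via compactness of $K\subset\mathcal{P}_p$, and let the former drive the product to infinity. The only cosmetic difference is that for the shifted factors the paper observes directly that $\lambda_{i_1}+\cdots+\lambda_{i_{p-1}}+\lambda_n>0$ (being a $p$-sum of an element of $\mathcal{P}_p$), giving each such factor $\geqslant R$, whereas you use boundedness of $K$ to get $\geqslant R-pM\geqslant R/2$; either bound suffices.
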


\begin{proof}
Since $K$ is compact, there is a positive constant $\varrho = C(K)$ such that,
for all $\lambda \in K $ and $\{i_1, \cdots, i_p\} \subset \{1, \cdots, n-1\}$,
\[
\lambda_{i_1} + \cdots + \lambda_{i_{p}} \geqslant \varrho.
\]
Therefore, for all $\lambda \in K$, we have
\begin{equation}
\begin{aligned}
&\; F(\lambda_1, \cdots, \lambda_n +R) \\
= &\; \Pi_{1\leqslant i_1 < \cdots < i_{p-1} \leqslant n-1} (\lambda_{i_1} + \cdots + \lambda_{i_{p-1}} + \lambda_n + R)\\
&\; \times \Pi_{1\leqslant i_1 < \cdots < i_{p} \leqslant n-1} (\lambda_{i_1} + \cdots + \lambda_{i_{p}})\\
\geqslant &\; R^{C_{n-1}^{p-1}} \times \Pi_{1\leqslant i_1 < \cdots < i_{p} \leqslant n-1} (\lambda_{i_1} + \cdots + \lambda_{i_{p}})\\
\geqslant &\; R^{C_{n-1}^{p-1}} \varrho^{C_{n-1}^p},
\end{aligned}
\end{equation}
which goes to infinity when $R$ goes to infinity.
Thus, the lemma is proved.
\end{proof}

The following two lemmas were essentially proved in \cite{Dong} (see Lemma 3.2 and Lemma 3.4),
which were used to deal with the bad third order terms.
Here, we restate them in the following form.
\begin{lemma}
\label{key-1}
Suppose $p\geqslant \frac{n}{2}$ and $A = diag (\lambda_1, \cdots, \lambda_n) \in P_p$
with $\lambda_1 \geqslant \cdots \geqslant \lambda_n$.
If $\lambda_n \geqslant - \delta \lambda_1$ where $0 < \delta \leqslant \frac{1}{2(p-1)}$,
we have
\[
 C \sum F^{ii}(A) \geqslant \lambda_1
\]
for $C$ sufficiently large.
\end{lemma}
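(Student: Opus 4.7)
The plan is to exploit that, in the regime $\lambda_n \geq -\delta \lambda_1$ with $\delta \leq 1/(2(p-1))$, a large portion of the $C_n^p$ factors defining $F$ are of order $\lambda_1$. This will force the smallest factor (and hence, through the explicit formula for $F^{nn}$, the quantity $\sum F^{ii}$) to dominate $\lambda_1$ linearly.

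First, I would observe that for every $p$-subset $S \ni 1$, the hypothesis gives
\[ \sum_{j \in S} \lambda_j \geq \lambda_1 - (p-1)\delta \lambda_1 \geq \lambda_1/2. \]
Grouping the $C_n^p$ factors of $F = \prod_S \sum_{j \in S} \lambda_j$ according to whether $S \ni 1$, one obtains
\[ F \geq (\lambda_1/2)^{C_{n-1}^{p-1}} \prod_{S \not\ni 1} \sum_{j \in S} \lambda_j. \]
Writing $T := \lambda_{n-p+1} + \cdots + \lambda_n$ for the smallest $p$-sum (positive by $p$-convexity), every factor in the remaining product is at least $T$, so this gives an upper bound
\[ T \leq F^{1/C_{n-1}^p} (2/\lambda_1)^{p/(n-p)}, \]
using the elementary identity $C_{n-1}^{p-1}/C_{n-1}^p = p/(n-p)$.

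Next, I would note that since $\{n-p+1, \ldots, n\}$ contains the index $n$, the explicit formula for $F^{nn}$ recorded earlier has $F/T$ as one of its summands, so
\[ \sum_i F^{ii} \geq F^{nn} \geq F/T \geq F^{1-1/C_{n-1}^p} (\lambda_1/2)^{p/(n-p)}. \]
Under the hypothesis $p \geq n/2$ (and $p < n$, the setting of the paper), we have $p/(n-p) \geq 1$, so for $\lambda_1 \geq 1$ this yields $\sum F^{ii} \geq c\,\lambda_1$ for a constant $c$ depending only on $n$, $p$, $\delta$, and a lower bound on $F$. For $\lambda_1 < 1$ the conclusion is automatic from the positive lower bound on $\sum F^{ii}$ supplied by Lemma~\ref{Dinew1}(2), together with the lower bound on $F$ via the scaling $F^{ii} = C_n^p F^{1-1/C_n^p}\tilde F^{ii}$. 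Taking $C$ to be the maximum of the two resulting constants closes the argument.

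The main obstacle I expect is the combinatorial bookkeeping: verifying the identity $C_{n-1}^{p-1}/C_{n-1}^p = p/(n-p)$ and recognising that the threshold $p \geq n/2$ is precisely what is required to make this exponent at least $1$, so that the bound on $\sum F^{ii}$ dominates $\lambda_1$ linearly rather than sub-linearly. The minor wrinkle of tracking the dependence on bounds of $F$ in the small-$\lambda_1$ regime deserves care but is otherwise routine.
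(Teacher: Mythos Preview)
Your proposal is correct and follows essentially the same approach as the paper: both arguments use the hypothesis $\lambda_n \geq -\delta\lambda_1$ to bound every factor containing the index $1$ below by $\lambda_1/2$, then exploit $F^{nn} \geq F/T$ with $T = \lambda_{n-p+1}+\cdots+\lambda_n$, and finally invoke $p \geq n/2$ to make the resulting power of $\lambda_1$ at least one. The only organizational difference is that the paper splits into the cases $T < 1/\lambda_1$ and $T \geq 1/\lambda_1$ (using the threshold $1/\lambda_1$ as a lower bound for the remaining $C_{n-1}^p-1$ factors in the second case), whereas you bound $T$ from above directly via $F \geq (\lambda_1/2)^{C_{n-1}^{p-1}} T^{C_{n-1}^{p}}$ and handle the small-$\lambda_1$ regime separately at the end; the substance is the same.
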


\begin{proof}
We divide the proof into two cases.

Case 1.
Suppose $\lambda_{n-p+1} + \lambda_{n-p+2} + \cdots + \lambda_n < \frac{1}{\lambda_1}$.
Since
\[
F^{nn} \geqslant \frac{F(\kappa)}{\lambda_{n-p+1} + \lambda_{n-p+2} + \cdots + \lambda_n},
\]
we see that
\[
F^{nn} \geqslant c_0 \lambda_1
\]
for some $c_0 > 0$ depending on $\inf f$. The lemma follows for sufficiently large $C$ depending on $\inf f$.

Case 2. Suppose $\lambda_{n-p+1} + \lambda_{n-p+2} + \cdots + \lambda_n \geqslant \frac{1}{\lambda_1}$.
For a fixed $(p-1)$-tuple $2\leqslant i_1 < \cdots < i_{p-1} \leqslant n$, we have
\[
\lambda_1 + \lambda_{i_1} + \cdots + \lambda_{i_{p-1}} \geqslant \frac{\lambda_1}{2}
\]
by our assumption.
Hence, we have
\[\begin{aligned}
F^{nn} \geqslant &\; \mathop{\Pi}\limits_{2\leqslant i_1 < \cdots < i_{p-1} \leqslant n } ( \lambda_1 + \lambda_{i_1} + \cdots + \lambda_{i_{p-1}} ) \\
& \times \mathop{\Pi}\limits_{ \substack{ 2\leqslant i_1 < \cdots < i_{p} \leqslant n \\ (i_1, \cdots, i_p) \neq (n-p+1, \cdots, n) } }
( \lambda_{i_1} + \lambda_{i_2} + \cdots + \lambda_{i_{p}} )\\
\geqslant &\; [ \frac{ \lambda_1}{2} ]^{C_{n-1}^{p-1}} [\frac{1}{\lambda_1}]^{C_{n-1}^p - 1}.
\end{aligned}\]
For $p\geqslant \frac{n}{2}$, a direct calculation shows that
\[
C_{n-1}^{p-1} - C_{n-1}^{p} = \frac{(n-1)\cdots(n-p+1)}{(p-1)!} \Big( 1 - \frac{n-p}{p} \Big) \geqslant 0.
\]
Therefore, we obtain
\[
2^{C_{n-1}^{p-1} } F^{nn} \geqslant  \lambda_1 .
\]
\end{proof}

\begin{lemma}
\label{lem4}
Suppose $A = diag (\lambda_1, \cdots, \lambda_n) \in P_p$
with $\lambda_1 \geqslant \cdots \geqslant \lambda_n$.
For any given $1> \epsilon > 0$, there is a $\delta = \delta(\epsilon) > 0$ such that,
if $\lambda_n \geqslant - \delta \lambda_1$,
the inequality
\[
-2 F^{1i,i1}(A) + (1 + \epsilon) \frac{F^{11}(A)}{\lambda_1} \geqslant (1 + \epsilon) \frac{F^{ii}(A)}{\lambda_1}
\]
holds for $i= 2,3, \cdots n$.
\end{lemma}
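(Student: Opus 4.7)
The plan is to derive explicit formulas for $F^{11}$, $F^{ii}$, and $F^{1i,i1}$ from the combinatorial derivative formulas recorded just after Lemma~\ref{Dinew1}, and then reduce the stated inequality to a scalar inequality in $\lambda_1,\lambda_i$ which holds termwise in the resulting expansion.

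Write $\mathcal{A}$ for the collection of $(p-1)$-subsets $S\subset\{2,\dots,n\}\setminus\{i\}$, $\mathcal{B}$ for the $(p-2)$-subsets of the same set, and set $\sigma_S:=\sum_{k\in S}\lambda_k$. Splitting each $p$-subset of $\{1,\dots,n\}$ containing $1$ (respectively $i$) according as it also contains $i$ (resp.\ $1$), the formula for $F^{kk}$ gives
\[
F^{11}=\sum_{S\in\mathcal{A}}\frac{F}{\lambda_1+\sigma_S}+\sum_{S''\in\mathcal{B}}\frac{F}{\lambda_1+\lambda_i+\sigma_{S''}},
\]
and $F^{ii}$ has the same form with $\lambda_i$ replacing $\lambda_1$ in the first sum; the $\mathcal{B}$-sums are identical and cancel in $F^{11}-F^{ii}$. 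Meanwhile, in the formula for $F^{kr,rk}$ with $(k,r)=(1,i)$, the admissible pairs of index sets are parametrized exactly by $S\in\mathcal{A}$ via $\{i_1,\dots,i_p\}=S\cup\{i\}$ and $\{j_1,\dots,j_p\}=S\cup\{1\}$, so
\[
-F^{1i,i1}=\sum_{S\in\mathcal{A}}\frac{F}{(\lambda_i+\sigma_S)(\lambda_1+\sigma_S)}.
\]

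Combining these expansions, the difference of the two sides of the claim becomes
\[
\sum_{S\in\mathcal{A}}\frac{F}{(\lambda_i+\sigma_S)(\lambda_1+\sigma_S)}\left[2-(1+\epsilon)\frac{\lambda_1-\lambda_i}{\lambda_1}\right].
\]
Since $A\in P_p$ forces each denominator factor to be strictly positive (and $\lambda_1>0$, since otherwise every $p$-sum would be nonpositive), the prefactor is positive and it suffices that the bracket be nonnegative. This amounts to $2\lambda_1\geq(1+\epsilon)(\lambda_1-\lambda_i)$, equivalently $\lambda_i\geq-\frac{1-\epsilon}{1+\epsilon}\lambda_1$. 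Taking $\delta(\epsilon):=\frac{1-\epsilon}{1+\epsilon}>0$ and invoking $\lambda_i\geq\lambda_n\geq-\delta\lambda_1$ closes the argument. The whole proof is essentially bookkeeping; the only conceptual step is the cancellation of the $\mathcal{B}$-sums between $F^{11}$ and $F^{ii}$, which leaves precisely the $\mathcal{A}$-terms needed to pair against $-F^{1i,i1}$.
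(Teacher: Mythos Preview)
Your proof is correct and follows essentially the same route as the paper. The paper starts from the identity $\frac{F^{ii}}{\lambda_1}=\frac{\lambda_1-\lambda_i}{\lambda_1}(-F^{1i,i1})+\frac{F^{11}}{\lambda_1}$ (a direct rewriting of the second-derivative formula $F^{1i,i1}=\frac{F^{11}-F^{ii}}{\lambda_1-\lambda_i}$), while your combinatorial expansion with the $\mathcal{B}$-sum cancellation is just a re-derivation of that identity; both then reduce to the same scalar condition $(1+\epsilon)\frac{\lambda_1-\lambda_i}{\lambda_1}\leq 2$, and your explicit choice $\delta=\frac{1-\epsilon}{1+\epsilon}$ is exactly what the paper's ``sufficiently small $\delta$'' amounts to.
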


\begin{proof}
Observe that
$\frac{F^{ii}}{\lambda_1} = \frac{\lambda_1 -\lambda_i}{\lambda_1}(- F^{1i,i1}) + \frac{F^{11}}{\lambda_1}$, from which we obtain that
\[
\frac{F^{ii}}{\lambda_1} \leqslant - F^{1i,i1} + \frac{F^{11}}{\lambda_1} \; \mbox{for} \; \lambda_i \geqslant 0.
\]
Since $-\lambda_n \leqslant \delta \lambda_1$, we have
\[
\frac{F^{ii}}{\lambda_1} \leqslant - (1+\delta) F^{1i,i1} + \frac{F^{11}}{\lambda_1} \; \mbox{for}\; \lambda_i \leqslant 0.
\]
By the above two inequalities, we get the desired inequality for sufficiently small $\delta$.
\end{proof}

\section{Global and interior $C^2$ estimates}

Let $\langle \cdot, \cdot\rangle$ be the inner product in $\mathbb{R}^{n+1}$
and $\{e_1, \cdots, e_n, e_{n+1}\}$ be the standard basis of $\mathbb{R}^{n+1}$.
Then $X = (x, u(x)) = x_1 e_1 + \cdots + x_n e_n + u(x) e_{n+1}$
and $|X|^2 = \langle X, X\rangle = \sum_i x_i^2 + u(x)^2$.

\subsection{Global $C^2$ estimates}

Now we prove the global $C^2$ estimate. The main idea which is from \cite{Dong} is to control the bad third order terms.

\begin{theorem}
Let $\Omega$ be a bounded domain in $\mathbb{R}^n$.
Suppose that $u\in C^4 (\Omega) \cap C^2 (\bar \Omega)$ is an admissible solution to \eqref{eqn}.
Then there is a positive constant $C$ such that
\begin{equation}
\label{C2}
\sup_{ \Omega} |D^2 u| \leqslant C (1 + \sup_{\partial \Omega} |D^2 u|),
\end{equation}
where $C$ depends on $n$, $p$, $|u|_{C^1}$, $\inf f$ and $|f|_{C^2}$.
\end{theorem}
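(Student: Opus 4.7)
The plan is to bound the largest eigenvalue $\lambda_1$ of the symmetric matrix $(a_{ij})$ defined in \eqref{matrix}. Following the strategy of \cite{CJ,Dong}, consider a test function of the form
\[
W = \log \lambda_1 + \phi(|Du|^2) + \psi(u),
\]
with $\phi,\psi$ suitably chosen (monotone and convex). If $W$ attains its maximum on $\partial\Omega$ the estimate \eqref{C2} is immediate, so we may assume the maximum is at an interior point $x_0$. After a rotation we may arrange that $(a_{ij}(x_0))$ is diagonal with $\lambda_1\geq\cdots\geq\lambda_n$, using the standard symmetric-function perturbation when $\lambda_1$ is a repeated eigenvalue so that it can be treated as smooth.

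At $x_0$, the first-order condition $W_i=0$ yields $a_{11,i}/\lambda_1 = -\phi'\cdot(|Du|^2)_i - \psi'\cdot u_i$, while the maximum-principle inequality $G^{ij}W_{ij}\leq 0$ must be combined with \eqref{G-eqn} differentiated twice in the direction $e_1$. The second differentiation produces on the good side the concavity contribution $-\sum F^{ij,kl}a_{ij,1}a_{kl,1}\geq 0$ which, at a diagonal point, contains the positive off-diagonal piece $-2\sum_{i\neq 1}F^{1i,i1}(a_{1i,1})^2$. Using the eigenvalue-perturbation identity
\[
\lambda_{1,ii} = a_{11,ii} + \sum_{k>1}\frac{2(a_{1k,i})^2}{\lambda_1 - \lambda_k} + \text{l.o.t.},
\]
these contributions partially cancel the third-order terms generated when $\log\lambda_1$ is differentiated. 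On the bad side, the $\nu$-dependence of $\tilde f$ produces a term of the form $-Cv_{11}$ with $v=\sqrt{1+|Du|^2}$, which is of the same order as $\lambda_1$ itself and is the main source of difficulty.

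The proof then splits on the size of $-\lambda_n$. When $-\lambda_n\geq \delta\lambda_1$ for a small universal $\delta$, part (4) of Lemma \ref{Dinew1} gives $F^{jj}\geq\theta\sum F^{ii}$ for $j\geq n-p+1$, providing enough uniform ellipticity to absorb $-Cv_{11}$ and the residual third-order terms via Cauchy--Schwarz. When $-\lambda_n<\delta\lambda_1$, Lemma \ref{key-1} supplies the crucial bound $\lambda_1\leq C\sum F^{ii}$, so the offending $-Cv_{11}\leq C(1+\lambda_1)$ is dominated by $C\sum F^{ii}$; simultaneously Lemma \ref{lem4} is designed precisely to absorb the bad cross-terms of the form $F^{ii}(a_{1i,1})^2/\lambda_1$, arising from the $\lambda_{1,ii}$ expansion, into the good concavity term $-2F^{1i,i1}(a_{1i,1})^2$. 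In either case the inequality closes to give $\lambda_1(x_0)\leq C$, from which \eqref{C2} follows. The principal obstacle, and the reason the hypothesis $p\geq n/2$ is needed, is exactly the control of $-Cv_{11}$ without structural assumptions on $f$, which Lemma \ref{key-1} is engineered to provide.
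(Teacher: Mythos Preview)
Your outline is correct in its architecture: the two-case split on the sign of $\lambda_n+\delta\lambda_1$, the use of Lemma~\ref{Dinew1}(4) in the first case, and the combination of Lemma~\ref{key-1} (to absorb the bad linear term coming from $\nu$-dependence) with Lemma~\ref{lem4} (to absorb the residual third-order terms into the concavity piece $-F^{1i,i1}$) in the second case are exactly the structural ingredients the paper uses.

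The paper, however, carries this out with a different test function and coordinate choice. Instead of $\log\lambda_1+\phi(|Du|^2)+\psi(u)$ and eigenvalue-perturbation formulas, it works in Caffarelli--Nirenberg--Spruck tangential coordinates at the maximum point and tests $G=(1/w-a)^{-1}\,e^{\frac{A}{2}|X|^2}\,h_{\xi\xi}$; here $|X|^2=|x|^2+u^2$, so the exponential weight is not merely a function of $u$. This has two practical advantages: using $h_{\xi\xi}$ (equivalently $v_{11}$ in the tangential frame) avoids the eigenvalue-multiplicity and perturbation bookkeeping entirely, and the factor $(1/w-a)^{-1}$ produces directly the quadratic good term $\frac{aw}{1-aw}\tF^{ii}v_{ii}^2$, which is what makes the Cauchy--Schwarz closure in both cases clean. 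Your $\phi(|Du|^2)$ would play the same role, but the computation linking $a_{ij}$ to $u_{ij}$ in graph coordinates is messier than in the tangential frame.

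One notational slip: in the paper $v$ is the local height function over the tangent plane at $X_0$, so $v_{11}(0)=\kappa_1$; it is \emph{not} $\sqrt{1+|Du|^2}$. The ``bad term $-Cv_{11}$'' you refer to is therefore literally $-C\kappa_1$, which you correctly identify as the main obstruction and which Lemma~\ref{key-1} is designed to handle.
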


\begin{proof}
By the relation between $h_{ij}$ and $u_{ij}$, to prove the global $C^2$ estimate it is equivalent to prove the curvature estimate.
Let $a$ be a positive constant depending on $|Du|_{C^0}$ such that $\frac{1}{w} \geqslant 2a$.
Consider the following test function
\[
G (x, \xi) =  (\frac{1}{w} - a)^{-1 } e^{\frac{A}{2} |X|^2} h_{\xi\xi},
\]
where $X \in M_u$, $\xi \in T_X M_u$ is a unit vector and $A$ is a large constant to be determined later.
Suppose the maximum of $G$ is achieved at a point $X_0 = (x_0, u(x_0))\in M_u$ and $\xi_0 \in T_{X_0} M_u$.
We choose new orthonormal vectors $\{\epsilon_1, \cdots, \epsilon_n, \epsilon_{n+1}\}$ at $X_0$ such that
$\xi_0 = \epsilon_1$ and $\nu(X_0)= \epsilon_{n+1}$. Denote $x_0 = (x_1^0, \cdots, x_n^0)$.
As in \cite{CNS5}, we can represent the hypersurface near $X_0$ by the tangential coordinates
$y_1, \cdots, y_n$ and $v(y)$:
\[
X = \sum_{j=1}^n x^0_j e_j + u(x_0) e_{n+1} + \sum_{j=1}^n y_j \epsilon_j + v(y) \epsilon_{n+1}.
\]
Thus, we have $v(0) = 0$ and $\nabla v (0) = 0$. Using the same notations as in \cite{CNS5}, set
$\omega = (1 + |\nabla v|^2)^{1/2}$.
Then the principal curvature in the $\epsilon_1$ direction is
\[
\kappa_1 = \frac{v_{11}}{(1+v_1^2)\omega}.
\]
Under the new coordinates, we denote the point $X \in M_u$ near $X_0$ by $Y = \sum_{j=1}^n y_j \epsilon_j + v(y) \epsilon_{n+1}$.
The normal in the new coordinates still denoted by $\nu$ is given as
\[
\nu(Y) = - \frac{1}{\omega} \sum_{j=1}^n v_j \epsilon_j + \frac{1}{\omega} \epsilon_{n+1}.
\]
Recall that $\epsilon_{n+1} = \frac{1}{w(x_0)} (-u_1(x_0), \cdots, -u_n(x_0), 1)$.
Then,
\begin{equation}
\label{1w}
\frac{1}{w} = \nu(Y) \cdot e_{n+1} = \frac{1}{\omega w(x_0)} - \frac{1}{\omega} \sum_{j=1}^n a_j v_j,
\end{equation}
where $a_j = \epsilon_j \cdot e_{n+1}$. It is easy to see that $\sum_j a_j^2 \leqslant 1$.

We compute that $|Y|^2 = \sum_j y_j^2 + v(y)^2$ and
$|X|^2 = |X_0|^2 + |Y|^2 + 2 \langle X_0, Y\rangle$.
At the point $y = 0$ the function
\begin{equation}
G (y) =  (1/w - a)^{-1 } e^{\frac{A}{2} |X|^2} \frac{v_{11}}{(1+v_1^2)\omega}
\end{equation}
attains its maximum that is equal to $G(x_0, \xi_0)$. At this point,
we also have $v_{1j}= 0$ for $j = 2, \cdots, n$
since the $y_1$ axe lines along the principal direction $\epsilon_1$.
By a rotation of coordinates $y_2, \cdots, y_n$, we may assume that
$v_{ij} (0)$ is diagonal.
By \eqref{matrix} and $\nabla v (0) = 0$, we find that
$a_{il} = v_{il} = \delta_{il} \kappa_i$ at the origin.
This implies that $\tF^{ij}$ is diagonal.
Without loss of generality, we assume that
\[
 \kappa_1 \geqslant \kappa_2 \geqslant \cdots \geqslant \kappa_n.
\]
At the origin, by direct calculations (see (2.12) in \cite{CNS5}), we see that
$\frac{\partial a_{il}}{\partial y_j} = a_{ilj} = v_{ilj}$
and
\[
\frac{\partial^2 a_{il}}{\partial y_1^2} = a_{il11} = v_{il 11} - v_{11}^2 (v_{il} + \delta_{i1} v_{1l} + \delta_{1l} v_{1i}).
\]

Recall that $v(0) = 0$ and $\nabla v (0) = 0$.
Differentiating $\log G(y)$ at $y=0$ twice yields that,
for $1\leqslant i \leqslant n$,
\begin{equation}
\label{diff1}
0 = - \frac{  ( 1/w )_i  }{1/w - a} + A (y_i + \langle X_0, \epsilon_i \rangle)
+ \frac{ v_{11i}}{v_{11}} - \frac{2v_1 v_{1i}}{1+v_1^2} - \frac{\omega_i}{\omega}
\end{equation}
and
\begin{equation}
\label{diff2}
0\geqslant - \Big( \frac{  ( 1/w )_i  }{1/w - a} \Big)_i + A + \frac{v_{11ii}}{v_{11}} - \Big(\frac{ v_{11i}}{v_{11}}\Big)^2
- 2v_{1i}^2 - v_{ii}^2.
\end{equation}
Contracting \eqref{diff2} with $\tF^{ii}$, we get
\begin{equation}
\label{S2-1}
\begin{aligned}
0\geqslant &\; \frac{\tF^{ii} v_{11ii}}{v_{11}} - \tF^{ii} \Big(\frac{ v_{11i}}{v_{11}}\Big)^2 - \frac{ \tF^{ii} (1/w)_{ii} }{1/w-a} \\
&\; + \tF^{ii} \Big(\frac{ (1/w)_{i} }{1/w-a}\Big)^2 - 2 \tF^{11} v_{11}^2 - \tF^{ii} v_{ii}^2 + A \sum \tF^{ii}.
\end{aligned}
\end{equation}
By \eqref{1w}, differentiate $1/w$ with respect to $y_i$ to obtain
\[
(1/w)_i = - a_i v_{ii} \;\mbox{and} \; (1/w)_{ii} = -\sum_j a_j v_{jii} - \frac{v_{ii}^2}{w(x_0)}
\]
for all $1\leqslant i \leqslant n$.
Combining the above formulas with the derivatives of $a_{il}$,
we therefore have from \eqref{S2-1} that
\begin{equation}
\label{S2-2}
\begin{aligned}
0\geqslant &\; \frac{\tF^{ii} a_{ii11}}{v_{11}} + v_{11} \tF^{ii} v_{ii} - \tF^{ii} \Big(\frac{ v_{11i}}{v_{11}}\Big)^2
+ \frac{a_j \tilde F^{ii} a_{iij}}{1/w -a}\\
&\; + \frac{\tF^{ii} v_{ii}^2}{1 - a w}
+ \tF^{ii} \Big(\frac{ (1/w)_{i} }{1/w-a}\Big)^2 - \tF^{ii} v_{ii}^2 + A \sum \tF^{ii}.
\end{aligned}
\end{equation}

By differentiating the equation
$\tilde F (a_{ij}) = \tilde f(X,\nu(X)) = \tilde f(Y, \nu(Y))$
twice, we obtain
\[
\tF^{ii} a_{iij} = (\tilde f)_j \geqslant - C -C v_{11}
\]
and
\begin{equation}
\label{diff-eqn-2}
\begin{aligned}
\tF^{ii} a_{ii11} = &\;- \tF^{ij,kl} a_{ij1} a_{kl1} + (\tilde f)_{11} \\
\geqslant &\; - \tF^{ij,kl} a_{ij1} a_{kl1} - C v_{11}^2 - \sum_j \tilde f_{\nu_j} \frac{v_{j11}}{\omega}
\end{aligned}
\end{equation}
for sufficiently large $v_{11}$, where $C$ depends on $|f|_{C^2}$ and $|u|_{C^1}$.
Substituting the above inequalities into \eqref{S2-2}, we arrive at
\begin{equation}
\label{S2-3}
\begin{aligned}
0 \geqslant &\;  - \frac{2}{v_{11}} \sum_{i\geqslant 2} \tilde F^{1i,i1} a_{1i1} a_{i11} - Cv_{11}
- \sum_j \tilde f_{\nu_j} \frac{v_{j11}}{v_{11}} - C\\
&\; - \tF^{ii} \Big(\frac{ v_{11i}}{v_{11}}\Big)^2 + \tF^{ii} \Big(\frac{ (1/w)_{i} }{1/w-a}\Big)^2 + \frac{aw}{1- aw} \tF^{ii} v_{ii}^2
+ A \sum \tF^{ii},
\end{aligned}
\end{equation}
provided that $v_{11}$ is large enough, where we used by Lemma \ref{Dinew1} (3) that $\tF^{ii} a_{ii} = \tilde f$.
Using \eqref{diff1}, we further derive that
\begin{equation}
\label{S2-4}
\begin{aligned}
0 \geqslant &\;  \frac{aw}{1- aw} \tF^{ii} v_{ii}^2 - \frac{2}{v_{11}} \sum_{i\geqslant 2} \tilde F^{1i,i1} a_{1i1} a_{i11} - CA\\
&\; - \tF^{ii} \Big(\frac{ v_{11i}}{v_{11}}\Big)^2 + \tF^{ii} \Big(\frac{ (1/w)_{i} }{1/w-a}\Big)^2
+ A \sum \tF^{ii} - C v_{11}
\end{aligned}
\end{equation}
for large $A$.

We now begin to deal with the third order terms.
The following lemma ensures us that in what follows we can assume that
$\kappa_n \geqslant - \delta \kappa_1$, where $\delta \leqslant \frac{1}{2(p-1)}$ is a small positive constant to be determined.
\begin{lemma}
\label{lem3}
Suppose that $\kappa_n \leqslant - \delta \kappa_1$.
Then, we can derive an upper bound for $\kappa_1$ from \eqref{S2-4}.
\end{lemma}

\begin{proof}
By the critical equation \eqref{diff1} and the Cauchy-Schwarz inequality, we have
\[
\tF^{ii} \Big(\frac{ v_{11i}}{v_{11}}\Big)^2 \leqslant  (1+ \varepsilon ) \tF^{ii} \Big(\frac{ (1/w)_{i} }{1/w-a}\Big)^2
+ \Big( 1 + \frac{1}{\varepsilon}\Big) A^2 \tF^{ii} (y_i + \langle X_0, \epsilon_i\rangle )^2
\]
for any $\varepsilon > 0$. From \eqref{S2-4} and $- \tF^{1i,i1} \geqslant 0$, we see that
\begin{equation}
\label{S2-6}
\begin{aligned}
0 \geqslant &\; \frac{aw}{1- aw} \tF^{ii}v_{ii}^2 - \varepsilon \tF^{ii} \Big(\frac{ (1/w)_{i} }{1/w-a}\Big)^2
- \frac{CA^2}{\varepsilon} \sum \tF^{ii} - C v_{11}
\end{aligned}
\end{equation}
for $v_{11}$ sufficiently large.
Using $(1/w)_i = - a_i v_{ii}$ and choosing $\varepsilon$ sufficiently small, we obtain from \eqref{S2-6} that
\[\begin{aligned}
0 \geqslant &\;  \frac{a}{2} \tF^{ii} v_{ii}^2 - \frac{CA^2}{\varepsilon} \sum \tF^{ii} - C v_{11}\\
\geqslant &\; \frac{a\theta}{2} \big(\sum \tF^{ii}\big) \delta^2 \kappa_1^2   - \frac{CA^2}{\varepsilon} \sum \tF^{ii} - Cv_{11},
\end{aligned}\]
where we used Lemma \ref{Dinew1} (4) in the second inequality.
This implies an upper bound for $\kappa_1$ since $\sum \tF^{ii} \geqslant p$.

\end{proof}

For an $\epsilon > 0$ sufficiently small, we choose $\delta = \delta (\epsilon)$ such that Lemma \ref{lem4} holds.
Now we assume $\kappa_n \geqslant - \delta \kappa_1$.
By Lemma \ref{key-1}, the inequality \eqref{S2-4} becomes
\begin{equation}
\label{S2-5}
\begin{aligned}
0 \geqslant &\; \frac{aw}{1- aw} \tF^{ii} v_{ii}^2 - \frac{2}{v_{11}} \sum_{i\geqslant 2} \tilde F^{1i,i1} v_{1i1} v_{i11} - CA\\
&\; - \tF^{ii} \Big(\frac{ v_{11i}}{v_{11}}\Big)^2
 + \tF^{ii} \Big(\frac{ (1/w)_{i} }{1/w-a}\Big)^2
+ \frac{A}{2} \sum \tF^{ii}
\end{aligned}
\end{equation}
as long as $A$ is sufficiently large.
For any given $\epsilon > 0$, by Lemma \ref{lem4}, \eqref{S2-5} becomes
\begin{equation}
\label{S2-5'}
\begin{aligned}
0 \geqslant &\, a \tF^{ii} v_{ii}^2 + \sum_{i\geqslant 2} \tF^{ii} \Big(\frac{ v_{11i}}{v_{11}}\Big)^2
- \tF^{ii} \Big(\frac{ v_{11i}}{v_{11}}\Big)^2 - CA\\
&\; - (1+\epsilon) \sum_{i\geqslant 2} \tF^{11} \Big(\frac{ v_{11i}}{v_{11}}\Big)^2
+ \tF^{ii} \Big(\frac{ (1/w)_{i} }{1/w-a}\Big)^2
+ \frac{A}{2} \sum \tF^{ii}.
\end{aligned}
\end{equation}
By the critical equation \eqref{diff1} and the Cauchy-Schwarz inequality, we see that
\begin{equation}
\label{h111}
\Big(\frac{ v_{111}}{v_{11}}\Big)^2
\leqslant ( 1 + \epsilon ) \Big(\frac{ (1/w)_{1} }{1/w-a}\Big)^2
+ \frac{CA^2 }{\epsilon}
\end{equation}
and, for $i \geqslant 2$,
\begin{equation}
\label{h11i}
(1+\epsilon) \Big(\frac{ v_{11i}}{v_{11}}\Big)^2 \leqslant
 (1+\epsilon)^2 \Big(\frac{ (1/w)_{i} }{1/w-a}\Big)^2 + \frac{CA^2}{\epsilon}.
\end{equation}
Substituting \eqref{h111} and \eqref{h11i} into \eqref{S2-5'},
we have
\begin{equation}
\label{S2-5''}
\begin{aligned}
0 \geqslant &\, a \tF^{ii} v_{ii}^2 - (1 + \epsilon) \tF^{11} \Big(\frac{ (1/w)_{1} }{1/w-a}\Big)^2
  - \frac{CA^2 }{\epsilon} \tF^{11} - CA\\
&\; - (1 + 3\epsilon) \tF^{11} \sum_{i\geqslant 2} \Big(\frac{ (1/w)_{i} }{1/w-a}\Big)^2
+ \tF^{ii} \Big(\frac{ (1/w)_{i} }{1/w-a}\Big)^2 + \frac{A}{2} \sum \tF^{ii}.
\end{aligned}
\end{equation}
Observe that $|v_{ii}| \leqslant n v_{11}$ and $\tF^{11} \leqslant \cdots \leqslant \tF^{nn}$.
We further derive that
\begin{equation}
\label{S2-9}
\begin{aligned}
0 \geqslant &\, a \tF^{ii} v_{ii}^2 - \frac{3n^2 \epsilon}{a^2} \tF^{11} v_{11}^2
  - \frac{CA^2 }{\epsilon} \tF^{11} - CA.
\end{aligned}
\end{equation}
Choose $\epsilon \leqslant \frac{a^3}{6n^2}$.
We then derive an upper bound for $v_{11}$ by Lemma \ref{Dinew1} (1).

\end{proof}

\subsection{Interior $C^2$ estimates}

As in Sheng-Urbas-Wang \cite{SUW}, we actually can derive an interior curvature estimate for equation \eqref{eqn} with $p\geqslant \frac{n}{2}$
by the same argument as that of Theorem 1.3 in \cite{Dong}.
Furthermore, if $f$ does not depend on $\nu$, the interior estimate holds for
all $p$.

\begin{theorem}
\label{int-C2}
Let $\Omega$ be a bounded domain in $\mathbb{R}^n$.
Suppose that $u\in C^4 (\Omega) \cap C^2 (\bar \Omega)$ is an admissible solution to \eqref{eqn} and \eqref{eqn-b}.
Then there is a positive constant $C$ and $\beta$ such that
\begin{equation}
\label{C2-interior}
\sup_{ \Omega} (-u)^\beta|h| \leqslant C,
\end{equation}
where $C$ and $\beta$ depend on $n$, $p$, $|u|_{C^1}$, $\inf \tilde f$ and $|f|_{C^2}$.
\end{theorem}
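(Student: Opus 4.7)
The plan is to mimic the global $C^2$ argument from Subsection 3.1 but with a test function that vanishes on $\partial\Omega$. Concretely, I would consider
\[
G(X,\xi) \;=\; (-u)^{\beta}\,\bigl(1/w - a\bigr)^{-1}\, e^{\frac{A}{2}|X|^{2}}\, h_{\xi\xi},
\]
for positive constants $\beta$, $A$ and $a$ to be chosen, with $a$ fixed so that $1/w \geqslant 2a$ on $\bar\Omega$. Since $u=0$ on $\partial\Omega$, the factor $(-u)^{\beta}$ forces $G$ to attain its maximum at some interior point $X_{0}\in M_{u}$; at such a point, after rotating as in the proof of \eqref{C2}, the matrices $a_{ij}$ and $\tilde F^{ij}$ are simultaneously diagonal and $\kappa_{1}\geqslant\cdots\geqslant\kappa_{n}$.

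At the maximum I would differentiate $\log G$ once to get the new critical equation
\[
0 \;=\; \beta\,\frac{u_{i}}{u} \;-\; \frac{(1/w)_{i}}{1/w-a} \;+\; A\bigl(y_{i}+\langle X_{0},\epsilon_{i}\rangle\bigr) \;+\; \frac{v_{11i}}{v_{11}} \;-\; \frac{2v_{1}v_{1i}}{1+v_{1}^{2}} \;-\; \frac{\omega_{i}}{\omega},
\]
and then twice, contracting with $\tilde F^{ii}$. The only new terms compared with \eqref{S2-1} are $\beta\,\tilde F^{ii}u_{ii}/u$ from $\bigl(\log(-u)\bigr)_{ii}$ and $-\beta\,\tilde F^{ii}(u_{i}/u)^{2}$ from the second derivative of the logarithm. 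Differentiating the equation twice, as in \eqref{diff-eqn-2}, produces the bad lower-order term $-Cv_{11}$ and the third-order terms $-\tilde F^{ij,kl}a_{ij1}a_{kl1}$ and $-\sum_{j}\tilde f_{\nu_{j}}v_{j11}/\omega$.

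The third-order terms I would handle exactly as in Subsection 3.1: if $\kappa_{n}\leqslant -\delta\kappa_{1}$, Lemma \ref{lem3} yields an upper bound on $\kappa_{1}$ directly (with the $a$ coming from our fixed $a$); otherwise, Lemma \ref{lem4} gives
\[
-\frac{2}{v_{11}}\sum_{i\geqslant 2}\tilde F^{1i,i1}v_{1i1}v_{i11} \;+\; (1+\epsilon)\,\tilde F^{11}\sum_{i\geqslant 2}\Bigl(\tfrac{v_{11i}}{v_{11}}\Bigr)^{2}\;\geqslant\;(1+\epsilon)\sum_{i\geqslant 2}\tilde F^{ii}\Bigl(\tfrac{v_{11i}}{v_{11}}\Bigr)^{2},
\]
and the bad $-Cv_{11}$ (which includes $-\beta\sum_{j}\tilde f_{\nu_{j}}v_{j11}/\omega$ absorbed via the critical equation) is in turn controlled by Lemma \ref{key-1}, which is the place where $p\geqslant n/2$ is used. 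Substituting the new critical equation into the squared third-order terms as in \eqref{h111}--\eqref{h11i} produces, besides the main $a\tilde F^{ii}v_{ii}^{2}$ term from $\frac{aw}{1-aw}\tilde F^{ii}v_{ii}^{2}$, extra contributions $\beta^{2}\tilde F^{ii}(u_{i}/u)^{2}$ and cross terms mixing $\beta u_{i}/u$ with $(1/w)_{i}/(1/w-a)=-a_{i}v_{ii}/(1/w-a)$.

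The main obstacle, and the only genuinely new point, is to absorb these singular contributions. Using $|u_{i}|\leqslant |Du|_{C^{0}}$, the term $\beta^{2}\tilde F^{ii}(u_{i}/u)^{2}$ is bounded by $C\beta^{2}u^{-2}\sum\tilde F^{ii}$, while Cauchy--Schwarz on the cross term lets me split it between $a\tilde F^{ii}v_{ii}^{2}$ (absorbable for small $\epsilon$) and an extra $C\beta^{2}u^{-2}\sum\tilde F^{ii}$. Likewise the new second-order contribution $\beta\tilde F^{ii}u_{ii}/u$ is estimated by $C\beta\,u^{-1}(\sum\tilde F^{ii}+\sum\tilde F^{ii}v_{ii}^{2})^{1/2}$, which Cauchy--Schwarz splits into a controlled piece of $a\tilde F^{ii}v_{ii}^{2}$ and $C\beta^{2}u^{-2}\sum\tilde F^{ii}$. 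The inequality that survives, analogous to \eqref{S2-9}, reads schematically
\[
0 \;\geqslant\; \tfrac{a}{2}\,\tilde F^{ii}v_{ii}^{2} \;-\; \tfrac{3n^{2}\epsilon}{a^{2}}\tilde F^{11}v_{11}^{2} \;-\; \tfrac{C\beta^{2}}{u^{2}}\sum\tilde F^{ii} \;-\; \tfrac{CA^{2}}{\epsilon}\tilde F^{11} \;-\; CA.
\]
Choosing $\epsilon\leqslant a^{3}/(6n^{2})$ and invoking Lemma \ref{Dinew1}(1), the $\tilde F^{ii}v_{ii}^{2}\geqslant \tilde F^{11}v_{11}^{2}$ piece dominates and yields $\tilde F^{11}v_{11}^{2}\leqslant C\beta^{2}u^{-2}\sum\tilde F^{ii}+CA^{2}\tilde F^{11}+CA$. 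Multiplying through by $(-u)^{2\beta}$ and using Lemma \ref{Dinew1}(1)--(2) converts this into a bound of the form $\bigl((-u)^{\beta}v_{11}\bigr)^{2}\leqslant C\bigl(1+(-u)^{2\beta-2}\bigr)$. Taking $\beta\geqslant 1$ gives a universal bound on $(-u)^{\beta}v_{11}$ at the maximum of $G$, and hence on $\sup_{\Omega}(-u)^{\beta}|h|$, completing the proof.
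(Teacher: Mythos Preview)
Your overall strategy and test function are the same as the paper's, and the case $\kappa_n\leqslant -\delta\kappa_1$ goes through essentially as you describe (though note that there you obtain $(-u)^2v_{11}^2\leqslant C$, not a bound on $\kappa_1$ alone). The gap is in the complementary case $\kappa_n\geqslant -\delta\kappa_1$. There the good positive term available is only $a\tilde F^{11}v_{11}^2$ (from $a\tilde F^{ii}v_{ii}^2$), whereas your crude estimate $\beta\,\tilde F^{ii}(u_i/u)^2\leqslant C\beta\,u^{-2}\sum\tilde F^{ii}$ leaves a singular term carrying the full $\sum\tilde F^{ii}$. Nothing in Lemma~\ref{Dinew1}(1)--(2) bounds $\sum\tilde F^{ii}/\tilde F^{11}$ from above, so the inequality $\tilde F^{11}v_{11}^2\leqslant C\beta^2u^{-2}\sum\tilde F^{ii}+\cdots$ does \emph{not} yield $\bigl((-u)^\beta v_{11}\bigr)^2\leqslant C$; the ratio $\sum\tilde F^{ii}/\tilde F^{11}$ can be arbitrarily large even under the constraint $\tilde F(\kappa)=\tilde f$ and $\kappa_n\geqslant -\delta\kappa_1$.

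The paper's repair is precisely to avoid the crude bound for $i\geqslant 2$. Using the critical equation one writes, for $i\geqslant 2$,
\[
-\beta\,\tilde F^{ii}\Bigl(\frac{u_i}{u}\Bigr)^2 \;\geqslant\; -\frac{3}{\beta}\,\tilde F^{ii}\Bigl(\frac{v_{11i}}{v_{11}}\Bigr)^2 \;-\; \frac{3}{\beta}\,\tilde F^{ii}\Bigl(\frac{(1/w)_i}{1/w-a}\Bigr)^2 \;-\; \frac{CA^2}{\beta}\,\tilde F^{ii},
\]
and then chooses $\beta$ large (specifically $3/\beta<\epsilon$) so that these pieces are absorbed by the \emph{gain} $\epsilon\sum_{i\geqslant 2}\tilde F^{ii}(v_{11i}/v_{11})^2$ produced by Lemma~\ref{lem4} together with the $\tilde F^{ii}((1/w)_i/(1/w-a))^2$ and $A\sum\tilde F^{ii}$ terms. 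Only the $i=1$ contribution survives as $C\beta^2u^{-2}\tilde F^{11}$, which \emph{does} match $\tilde F^{11}v_{11}^2$ and gives $u^2v_{11}^2\leqslant C$ via Lemma~\ref{Dinew1}(1). Thus the interplay between $\beta$ large and the $\epsilon$-gain from Lemma~\ref{lem4} is the missing idea; without it the singular $u^{-2}$ term cannot be closed. (A minor point: $\beta\,\tilde F^{ii}u_{ii}/u$ equals $\beta\tilde f/(w(x_0)u)$ exactly, so no Cauchy--Schwarz is needed there.)
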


\begin{proof}
We consider the test function
$G (x, \xi) =  (-u)^\beta (\frac{1}{w} - a)^{-1 } e^{\frac{A}{2} |X|^2} h_{\xi\xi}$.
Assume its maximum is attained at a point $X_0 = (x_0, u(x_0))\in M_u$ and $\xi_0 \in T_{X_0} M_u$.
As before, after choosing new coordinates at $X_0$, the maximum of
\[
G (y) =  (-u)^\beta (\frac{1}{w} - a)^{-1 } e^{\frac{A}{2} |X|^2} \frac{v_{11}}{(1+v_1^2)\omega}
\]
is achieved at $y = 0$ and $a_{il} = v_{il} = \kappa_i \delta_{il}$.
Without loss of generality, we also assume that
$ \kappa_1 \geqslant \kappa_2 \geqslant \cdots \geqslant \kappa_n$; then $\tF^{11} \leqslant \tF^{22} \leqslant \cdots \leqslant \tF^{nn}$.
 Note that
$u = X \cdot e_{n+1} = X_0 \cdot e_{n+1} + \sum_{j=1}^n a_j y_j + \frac{v(y)}{w(x_0)}$,
where $a_j$ are defined as before.
Differentiate $u$ with respect to $y_i$ to obtain
\[
u_i = a_i + \frac{v_i}{w(x_0)} \; \mbox{and}\; u_{ii} = \frac{v_{ii}}{w(x_0)}.
\]

Differentiating $\log G(y)$ at $y=0$ twice yields that,
for $1\leqslant i \leqslant n$,
\begin{equation}
\label{diff1-interior}
0 = \frac{\beta u_i}{u} - \frac{  ( 1/w )_i  }{1/w - a} + A (y_i + \langle X_0, \epsilon_i \rangle)
+ \frac{ v_{11i}}{v_{11}}
\end{equation}
and
\begin{equation}
\label{diff2-interior}
0\geqslant \frac{\beta u_{ii}}{u} - \beta \Big(\frac{ u_i}{u}\Big)^2 - \Big( \frac{  ( 1/w )_i  }{1/w - a} \Big)_i + A + \frac{v_{11ii}}{v_{11}} - \Big(\frac{ v_{11i}}{v_{11}}\Big)^2
- 2v_{1i}^2 - v_{ii}^2.
\end{equation}
Contracting \eqref{diff2-interior} with $\tF^{ii}$ and using $\tF^{ii} v_{ii} = \tF^{ii} a_{ii} = \tilde f$
and \eqref{diff1-interior}, similar to \eqref{S2-4}, we will arrive at
\begin{equation}
\label{In-S2-2}
\begin{aligned}
0 \geqslant &\;  \frac{C\beta}{u} - \beta \tF^{ii} \Big(\frac{ u_i}{u}\Big)^2
- \frac{2}{v_{11}} \sum_{i\geqslant 2} \tilde F^{1i,i1} a_{1i1} a_{i11} - C A - C v_{11}\\
&\; - \tF^{ii} \Big(\frac{ v_{11i}}{v_{11}}\Big)^2 + \tF^{ii} \Big(\frac{ (1/w)_{i} }{1/w-a}\Big)^2 + \frac{aw}{1- aw} \tF^{ii} v_{ii}^2
+ A \sum \tF^{ii}
\end{aligned}
\end{equation}
as long as $v_{11}$ large enough.

If $\kappa_n \leqslant - \delta \kappa_1$ for some $\delta > 0$, similar to Lemma \ref{lem3},
by the critical equation \eqref{diff1-interior} and the Cauchy-Schwarz inequality, we have
\begin{equation}
\label{vi11}
 \Big(\frac{ v_{11i}}{v_{11}}\Big)^2 \leqslant  (1+ \varepsilon ) \Big( \frac{  (1/w)_i }{1/w-a} \Big)^2
+ \frac{CA^2}{\varepsilon}
+ \frac{C \beta^2}{\varepsilon} \frac{ 1}{u^2}
\end{equation}
for any $\varepsilon > 0$,
where $C$ depends on $|Du|_{C^0}$.
Substituting the above inequality into \eqref{In-S2-2},
we then have
\begin{equation}
\label{In-S2-3}
\begin{aligned}
0 \geqslant &\; \frac{a}{2} \tF^{ii} v_{ii}^2
+ \frac{C\beta}{u}
  - \frac{C \beta^2}{\varepsilon} \frac{ \sum \tF^{ii}}{u^2}
- \frac{CA^2}{\varepsilon} \sum \tF^{ii} - Cv_{11}
\end{aligned}
\end{equation}
if $\varepsilon > 0$ is sufficiently small and $v_{11} > A$.
Utilizing $\kappa_n \leqslant - \delta \kappa_1$ and Lemma \ref{Dinew1} (4), we then obtain
\begin{equation}
\label{In-S2-4}
\begin{aligned}
0 \geqslant \frac{a\delta^2 \theta}{2} v_{11}^2 \sum \tF^{ii} + \frac{C\beta}{u}
- \frac{C \beta^2}{\varepsilon} \frac{ \sum \tF^{ii}}{u^2}
- \frac{CA^2}{\varepsilon} \sum \tF^{ii} - C v_{11},
\end{aligned}
\end{equation}
By Lemma \ref{Dinew1} (2), we derive, for some $C$ depending on $n$, $p$, $|u|_{C^1}$ and $|f|_{C^2}$,
that
\[
u^2 v_{11}^2 \leqslant C,
\]
which implies \eqref{C2-interior}.

For any $\epsilon > 0$ sufficiently small, choose $\delta = \delta (\epsilon)$ such that Lemma \ref{lem4} holds.
In the following, we will assume $\kappa_n \geqslant - \delta \kappa_1$.
By Lemma \ref{key-1} and \ref{lem4}, the inequality \eqref{In-S2-2} becomes
\begin{equation}
\label{In-S2-5}
\begin{aligned}
0 \geqslant &\; \frac{C\beta}{u} - \beta \tF^{ii} \Big(\frac{ u_i}{u}\Big)^2
 + (1 + \epsilon) \sum_{i\geqslant 2} \tF^{ii} \Big(\frac{ v_{11i}}{v_{11}}\Big)^2
- \tF^{ii} \Big(\frac{ v_{11i}}{v_{11}}\Big)^2 - CA \\
&\; - (1+\epsilon) \sum_{i\geqslant 2} \tF^{11} \Big(\frac{ v_{11i}}{v_{11}}\Big)^2
 + \tF^{ii} \Big(\frac{ (1/w)_{i} }{1/w-a}\Big)^2 + a \tF^{ii} v_{ii}^2
+ \frac{A}{2} \sum \tF^{ii}
\end{aligned}
\end{equation}
as long as $A$ is sufficiently large.
By \eqref{diff1-interior} and the Cauchy-Schwarz inequality, we have, for $i\geqslant 2$,
\[
- \beta  \tF^{ii} \Big(\frac{ u_i}{u}\Big)^2 \geqslant
- \frac{3}{\beta}  \tF^{ii} \Big( \frac{ v_{11i} }{v_{11}}\Big)^2
- \frac{3}{\beta}  \tF^{ii} \Big(\frac{ (1/w)_{i} }{1/w-a}\Big)^2
- \frac{C A^2}{\beta} \tF^{ii}.
\]
Taking $i = 1$ and $\varepsilon = \epsilon$ in \eqref{vi11} and substituting it together with the
above inequality into \eqref{In-S2-5}, we obtain
\begin{equation}
\label{In-S2-6}
\begin{aligned}
0 \geqslant &\;  a \tF^{ii} v_{ii}^2  - \beta \tF^{11} \Big(\frac{u_1}{u}\Big)^2
 + (\epsilon - \frac{3}{\beta}) \sum_{i\geqslant 2} \tF^{ii} \Big(\frac{ v_{11i}}{v_{11}}\Big)^2
 - \frac{C A^2}{\beta} \sum \tF^{ii}\\
&\; - \epsilon \tF^{11} \Big(\frac{ (1/w)_{1} }{1/w-a}\Big)^2
 -\frac{CA^2}{\epsilon} \tF^{11} - \frac{C \beta^2}{\epsilon} \frac{ \tF^{11}}{u^2}
 + \frac{A}{2} \sum \tF^{ii} - CA\\
&\; - (1+\epsilon) \sum_{i\geqslant 2} \tF^{11} \Big(\frac{ v_{11i}}{v_{11}}\Big)^2
+ (1 - \frac{3}{\beta}) \sum_{i\geqslant 2} \tF^{ii} \Big(\frac{ (1/w)_{i} }{1/w-a}\Big)^2 + \frac{C\beta}{u}.
\end{aligned}
\end{equation}
Recall that $\tF^{11} \leqslant \tF^{22} \leqslant \cdots \leqslant \tF^{nn}$.
Choosing $\epsilon$ sufficiently small and $\beta$ sufficiently large and using \eqref{vi11} for $i\geqslant 2$, we arrive at
\begin{equation}
\label{In-S2-7}
\begin{aligned}
0 \geqslant &\;  \frac{a}{2} \tF^{ii} v_{ii}^2
- (3\epsilon + \frac{3}{\beta}) \sum_{i\geqslant 2} \tF^{ii} \Big(\frac{ (1/w)_{i} }{1/w-a}\Big)^2\\
&\; + \frac{C\beta}{u} - \frac{C \beta^2}{\epsilon} \frac{ \tF^{11}}{u^2} -\frac{CA^2}{\epsilon} \tF^{11}
+ \frac{A}{4} \sum \tF^{ii} - CA\\
\geqslant &\;  \frac{a}{4} \tF^{ii} v_{ii}^2
+ \frac{C\beta}{u} - \frac{C \beta^2}{\epsilon} \frac{ \tF^{11}}{u^2} -\frac{CA^2}{\epsilon} \tF^{11}  - CA.
\end{aligned}
\end{equation}
By Lemma \ref{Dinew1} $(1)$, we conclude
\[
u^2 v_{11}^2 \leq C
\]
from the last inequality of \eqref{In-S2-7}, where $C$ depends on
$n$, $p$, $|u|_{C^1}$, $\inf \tilde f$ and $|f|_{C^2}$.
This completes the proof of Theorem \ref{int-C2}.

\end{proof}

\section{Boundary $C^2$ estimates}

In this section, we establish the boundary $C^2$ estimate. The main theorem in this section is stated as below.
\begin{theorem}
Let $\Omega$ be a strictly convex bounded domain in $\mathbb{R}^n$ with smooth boundary $\partial \Omega$.
Suppose $u\in C^3(\bar \Omega)$ is an admissible solution to \eqref{eqn} and \eqref{eqn-b}.
Then, there exists a positive constant $C$ such that
\begin{equation}
\label{C2b}
\max_{\partial \Omega} |D^2 u| \leqslant C,
\end{equation}
where $C$ depends on $n,p$, $|u|_{C^1}$, $|f|_{C^1}$, $\inf f$ and $\partial \Omega$.
\end{theorem}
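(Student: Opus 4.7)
The plan is to bound each entry of $D^2 u$ at a boundary point $x_0 \in \partial\Omega$ by decomposing the Hessian according to tangential and normal directions to $\partial\Omega$. Fix $x_0$, place it at the origin, let $e_n$ point along the inward normal, and write $\partial\Omega$ locally as $\{x_n = \rho(x')\}$ with $\rho(0)=0$, $D'\rho(0)=0$.

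\textbf{Tangential-tangential entries.} Since $u \equiv 0$ on $\partial\Omega$, differentiating the identity $u(x',\rho(x'))\equiv 0$ twice at $x'=0$ yields $u_{\alpha\beta}(0) = -u_n(0)\,\rho_{\alpha\beta}(0)$ for $\alpha,\beta < n$, so these entries are controlled by $|Du|_{C^0}$ and the principal curvatures of $\partial\Omega$.

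\textbf{Tangential-normal entries.} Following Ivochkina \cite{I1}, I would introduce the tangential operator $T_\alpha = \partial_\alpha + \rho_\alpha(x')\,\partial_n$, which satisfies $T_\alpha u \equiv 0$ on $\partial\Omega$ and $(T_\alpha u)_n(0) = u_{\alpha n}(0)$. On a boundary neighborhood $\Omega_\delta = \Omega\cap B_\delta(0)$, construct a barrier
\[
\Psi = A(\underline u - u) + B\,d(x) - K\sum_{\beta<n} x_\beta^2,
\]
where $d$ is the distance to $\partial\Omega$, and apply the linearized operator $L = G^{ij}\partial_{ij} + G^i\partial_i$ to $\Psi \pm T_\alpha u$. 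The goal is to prove $L\Psi \leq -\epsilon_0\bigl(\sum\tilde F^{ii} + 1\bigr)$ using three ingredients: (i) concavity of $G$ together with the subsolution inequality $\tilde F(\underline a_{ij}) \geq \tilde f$ absorbs the $A(\underline u - u)$ term; (ii) strict convexity of $\Omega$ makes the quadratic piece contribute a definite negative quantity in the tangential directions after hitting it with $\sum \tilde F^{\alpha\alpha}$; (iii) the gradient term $G^i\partial_i$ is handled using Lemma \ref{GS} and the equivalence $\frac{1}{w}\sum\tilde F^{ii} \geq \sum G^{ii} \geq \frac{1}{w^3}\sum\tilde F^{ii}$. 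Since $T_\alpha u$ vanishes on $\partial\Omega\cap B_\delta$ and is bounded on $\partial B_\delta\cap\bar\Omega$ by the $C^1$ estimate and the tangential-tangential bound, the maximum principle applied to $\pm T_\alpha u - \Psi$ on $\Omega_\delta$ gives, after differentiating at $0$ in the normal direction, $|u_{\alpha n}(0)| \leq C$.

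\textbf{Double normal entry.} Once the tangential block and the tangential-normal column of $a_{ij}(0)$ are bounded, admissibility forces the $(n-1)\times(n-1)$ tangential submatrix to have eigenvalues in a compact subset of $\mathcal{P}_{p-1}^{n-1}$. Suppose for contradiction that $u_{nn}(0)\to\infty$; then Lemma \ref{growth}, applied with the remaining eigenvalues ranging over a compact subset of $\mathcal{P}_p$ and $\lambda_n$ replaced by a large $R$, would force $\tilde F(a_{ij}(0))\to\infty$, contradicting $\tilde F(a_{ij}(0)) = \tilde f \leq C$. Hence $u_{nn}(0)\leq C$, and the admissibility of $u$ provides the corresponding lower bound.

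The \textbf{main obstacle} is the tangential-normal step: all three inputs (subsolution gap, strict convexity of $\Omega$, concavity of $G$) must be balanced simultaneously against the gradient term $G^i\partial_i$, which is nontrivial because $f$ depends on $\nu$. Tuning the constants $A$, $B$, $K$, $\delta$ so that $L\Psi$ is uniformly negative while $\Psi \geq |T_\alpha u|$ on $\partial\Omega_\delta$ is the delicate point; the previous sections' analysis of $G^s$ via Lemma \ref{GS} is precisely what allows this to close.
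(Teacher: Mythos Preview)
Your overall decomposition into tangential-tangential, tangential-normal, and double-normal parts is correct, and both the first and third steps are essentially right (though for the double-normal bound you should invoke strict convexity of $\partial\Omega$ together with $u_n(0)\leqslant -a<0$, not admissibility of $u$, to place the tangential block in a compact subset of $\mathcal{P}_p$; admissibility alone does not force this).

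The genuine gap is in the tangential-normal step. When you apply $L$ to $T_\alpha u = u_\alpha + \rho_\alpha u_n$, the commutator produces the term $2G^{ij}u_{ni}\rho_{\alpha j}$, which is of order $\sum_i \tilde F^{ii}|\kappa_i|$. None of the pieces in your barrier $\Psi = A(\underline u - u) + Bd - K\sum_{\beta<n} x_\beta^2$ can absorb this: the subsolution term and the quadratic $-K\sum x_\beta^2$ each contribute only quantities of order $\sum G^{ii}$, and for this operator there is no general inequality $\sum \tilde F^{ii}|\kappa_i| \leqslant C\bigl(1+\sum \tilde F^{ii}\bigr)$. (There is also a secondary issue: concavity gives $G^{ij}(\underline u_{ij}-u_{ij}) \leqslant G(D^2\underline u, Du) - \tilde f$, but $G(D^2\underline u, Du)$ is evaluated with the gradient of $u$, not of $\underline u$, so the subsolution inequality $\tilde F(\underline a_{ij})\geqslant \tilde f$ does not apply directly.)

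The paper closes this gap differently. First, it replaces $T_\alpha u$ by the Ivochkina function
\[
W = u_\alpha + \rho_\alpha u_n - \tfrac{1}{2}\sum_{\beta<n} u_\beta^2,
\]
whose extra quadratic piece contributes $-\sum_{\beta<n} G^{ij}u_{\beta i}u_{\beta j}$, a good negative term of order $\sum_i \tilde F^{ii}\kappa_i^2$ that absorbs the commutator via Cauchy--Schwarz; proving the resulting differential inequality $G^{ij}W_{ij}\leqslant C(1+|DW|+\sum G^{ii}+G^{ij}W_iW_j)$ is the real work (the paper's Lemma~\ref{Ivo}), and requires a case analysis on the matrix $\eta_{\beta i}=\sum_s\gamma_{\beta s}b_{si}$. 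Second, because of the $G^{ij}W_iW_j$ term on the right, one passes to $\tilde W = 1-e^{-BW}$ before comparing with the barrier. Third, the barrier itself does not use the subsolution at all: it is built from a strictly convex defining function $\tilde v$ of $\Omega$ and the distance $d$, namely $\Psi = \tilde v + \tfrac{\theta}{2}|x|^2 - td + \tfrac{N}{2}d^2$, and its supersolution property comes from applying Lemma~\ref{growth} to $\tilde F$ evaluated on $D^2\tilde v + N\,Dd\otimes Dd$, which grows like $N^{p/n}$. Your identification of $G^s$ as the main obstacle is off target; Lemma~\ref{GS} is used, but only as one input in controlling the various pieces of $G^{ij}W_{ij}$, not as the device that makes the barrier close.
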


Without loss of generality, we assume $0 \in \partial \Omega$.
Suppose that the boundary $\partial \Omega$ around the origin is given by
\begin{equation}
\label{boundary}
x_n = \rho(x') = \frac{1}{2} \sum_{\alpha < n} \kappa_\alpha^b x_\alpha^2 + O(|x'|^3),
\end{equation}
where $x' = (x_1, \cdots, x_{n-1})$ and $\kappa_1^b, \cdots, \kappa_{n-1}^b$ are the principal curvatures of $\partial \Omega$ at the origin.
Differentiating the boundary condition $u = 0$ at the origin twice, we obtain that
\begin{equation}
\label{C2tangential}
|u_{\alpha\beta} (0)| \leqslant C, \; \mbox{for} \; 1\leqslant \alpha, \beta \leqslant n-1.
\end{equation}

For the tangential-normal estimate, we need some idea from \cite{I1}.
Let $d(x) = dist (x, \partial \Omega)$ be the distance function to the boundary and $\Omega_\delta := \{x\in\Omega: |x| < \delta\}$.
We consider
\[
W:= u_{\alpha} + \rho_{\alpha} u_n - \frac{1}{2} \sum_{1\leqslant \beta \leqslant n-1} u_{\beta}^2,
\]
where $1\leqslant \alpha\leqslant n-1$. Similar to Lemma 5.1 in \cite{I1} and Lemma 5.3 in \cite{JS}, we can prove the following lemma.
\begin{lemma}
\label{Ivo}
For sufficiently small $\delta> 0$, we have,
\begin{equation}
\label{Ivo-inequality}
G^{ij} W_{ij} \leqslant C (1 + |DW| + \sum_i G^{ii} + G^{ij} W_i W_j)\; \mbox{in}\; \Omega_\delta,
\end{equation}
where $C$ depends on $n$, $p$, $|u|_{C^1}$, $|\tilde f|_{C^1}$, $\inf \tilde f$ and $\partial \Omega$.
\end{lemma}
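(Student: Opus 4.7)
The strategy is to compute $G^{ij}W_{ij}$ directly, substitute the once-differentiated equation for the third-derivative pieces, and exploit cancellations built into the choice of $W$. First I would compute
\[
W_i = u_{\alpha i} + \rho_{\alpha,i}u_n + \rho_\alpha u_{ni} - \sum_{\beta} u_\beta u_{\beta i}
\]
and then $W_{ij}$ by differentiating once more. Contracting with $G^{ij}$ decomposes $G^{ij}W_{ij}$ into three third-derivative pieces $G^{ij}u_{\alpha ij}$, $\rho_\alpha G^{ij}u_{nij}$, $-\sum_\beta u_\beta G^{ij}u_{\beta ij}$; a mixed second-derivative piece $2G^{ij}\rho_{\alpha,i}u_{nj}$; a favourable negative piece $-\sum_\beta G^{ij}u_{\beta i}u_{\beta j}$; and a bounded piece $u_n G^{ij}\rho_{\alpha,ij}$ that is absorbed into $C\sum G^{ii}$.

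Next I would differentiate the equation $G(D^2u,Du)=\tilde f(x,u,\nu)$ in the direction $x_k$ to obtain
\[
G^{ij}u_{ijk} = \tilde f_{x_k}+\tilde f_z u_k + \sum_l \tilde f_{\nu_l}(\nu_l)_k - G^p u_{pk},
\]
and substitute this for $k=\alpha,n,\beta$ into the three third-derivative pieces. Since $(\nu_l)_k = -u_{lk}/w + u_l u_m u_{mk}/w^3$, this reintroduces second-derivative terms of two kinds (a $\psi$-type linear in $u_{lk}$ and a $\chi$-type involving $u_m u_{mk}$), together with the $G^pu_{pk}$ contribution, where $G^p$ is uniformly bounded by Lemma \ref{GS}. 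The crucial algebraic observation is the rearrangement
\[
\sum_\beta u_\beta u_{l\beta} = u_{\alpha l}+\rho_{\alpha,l}u_n + \rho_\alpha u_{nl} - W_l,
\]
which is simply the formula for $W_l$ solved for $\sum_\beta u_\beta u_{\beta l}$. Applying the coefficient pattern $(1,\rho_\alpha,-u_\beta)$ to the three substituted equations and using the symmetry $u_{l\alpha}=u_{\alpha l}$, $u_{nl}=u_{ln}$, the combination $u_{l\alpha}+\rho_\alpha u_{ln}-\sum_\beta u_\beta u_{l\beta}$ collapses to $W_l-\rho_{\alpha,l}u_n$. Hence the $\psi$-, $\chi$- and $G^p$-contributions telescope to $\sum_l\psi_l(W_l-\rho_{\alpha,l}u_n)$, $\chi\sum_m u_m(W_m-\rho_{\alpha,m}u_n)$ and $-G^p(W_p-\rho_{\alpha,p}u_n)$, each manifestly bounded by $C(1+|DW|+\sum G^{ii})$.

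It remains to handle the mixed term $2G^{ij}\rho_{\alpha,i}u_{nj}$; the negative piece $-\sum_\beta G^{ij}u_{\beta i}u_{\beta j}$ can be simply dropped. I would apply Cauchy--Schwarz in the $G$-metric to split this term: the factor $G^{ij}\rho_{\alpha,i}\rho_{\alpha,j}$ is controlled by $C\sum G^{ii}$, while the factor $G^{ij}u_{ni}u_{nj}$ must be absorbed by the $G^{ij}W_iW_j$ allowed on the right-hand side. Since $W_i$ itself contains $\rho_\alpha u_{ni}$, a further Cauchy--Schwarz expansion of $(\rho_\alpha u_{ni})^2 = (W_i-u_{\alpha i}-\rho_{\alpha,i}u_n + \sum_\beta u_\beta u_{\beta i})^2$ bounds $\rho_\alpha^2 G^{ij}u_{ni}u_{nj}$ by $C(G^{ij}W_iW_j+\sum_\beta G^{ij}u_{\beta i}u_{\beta j}+\sum G^{ii})$, where the $\sum_\beta G^{ij}u_{\beta i}u_{\beta j}$ by-product is absorbed into the good negative term. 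Choosing $\delta$ small so that $|\rho_\alpha|=O(|x'|)$ stays small on $\Omega_\delta$ allows all the residual factors to be absorbed with constants depending only on $n,p,|u|_{C^1},|\tilde f|_{C^1},\inf\tilde f$ and $\partial\Omega$. The last step is the main obstacle: one must ensure no surviving $|D^2u|$ factor in the bound, which is precisely why the $G^{ij}W_iW_j$ term on the right-hand side is indispensable, and why the test function $W$ carries the quadratic correction $-\tfrac{1}{2}\sum u_\beta^2$---without this correction, the clean cancellation to $W_l-\rho_{\alpha,l}u_n$ would fail.
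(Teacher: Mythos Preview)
Your telescoping identity for the $\tilde f_\nu$-terms is correct and is indeed the reason for the quadratic correction in $W$. The gap is in the claim that ``$G^p$ is uniformly bounded by Lemma~\ref{GS}.'' Lemma~\ref{GS} gives
\[
G^s=-\frac{u_s}{w^2}\sum_i\frac{\partial\tilde F}{\partial\kappa_i}\kappa_i-\frac{2}{w(1+w)}\sum_{ij}\frac{\partial\tilde F}{\partial a_{ij}}a_{it}\bigl(wu_t\gamma^{sj}+u_j\gamma^{ts}\bigr).
\]
The first sum equals $\tilde f$ and is bounded, but the second contains $\tilde F^{ij}a_{it}=\sum_k\frac{\partial\tilde F}{\partial\kappa_k}\kappa_k\,b_{jk}b_{tk}$, whose size is $\sum_k\frac{\partial\tilde F}{\partial\kappa_k}|\kappa_k|$. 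This quantity is \emph{not} controlled by a constant nor by $\sum G^{ii}\sim\sum_k\frac{\partial\tilde F}{\partial\kappa_k}$, so $-G^p(W_p-\rho_{\alpha,p}u_n)$ is not ``manifestly bounded by $C(1+|DW|+\sum G^{ii})$'' as you assert. The same uncontrolled quantity $\sum_k\frac{\partial\tilde F}{\partial\kappa_k}|\kappa_k|$ is what the paper extracts from the mixed term $2G^{ij}\rho_{\alpha,i}u_{nj}$; your Cauchy--Schwarz treatment of that term also breaks down, since from $\rho_\alpha u_{ni}=W_i-u_{\alpha i}-\rho_{\alpha,i}u_n+\sum_\beta u_\beta u_{\beta i}$ you only bound $\rho_\alpha^2 G^{ij}u_{ni}u_{nj}$, and making $\rho_\alpha$ small gives \emph{less} control of $G^{ij}u_{ni}u_{nj}$, not more (and $\rho_\alpha$ vanishes at the origin). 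Moreover the right-hand side of that expansion also contains $G^{ij}u_{\alpha i}u_{\alpha j}$, which is not among the tangential terms $\sum_{\beta<n}G^{ij}u_{\beta i}u_{\beta j}$ you propose to absorb.

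The paper does not drop the good negative term $-\sum_{\beta<n}G^{ij}u_{\beta i}u_{\beta j}$; it is the engine of the whole proof. Diagonalising, this term equals $-w\sum_i\bigl(\sum_{\beta<n}\eta_{\beta i}^2\bigr)\frac{\partial\tilde F}{\partial\kappa_i}\kappa_i^2$ with $\eta_{\beta i}=\sum_s\gamma_{\beta s}b_{si}$, and the argument splits on whether every column of $\eta$ has tangential mass $\sum_{\beta<n}\eta_{\beta i}^2\geqslant\epsilon$. If so, one gets $-\epsilon\sum_i\frac{\partial\tilde F}{\partial\kappa_i}\kappa_i^2$, which absorbs $C\sum_i\frac{\partial\tilde F}{\partial\kappa_i}|\kappa_i|$ by Cauchy--Schwarz and also dominates the dangerous part of $-G^sW_s$ after one more Cauchy--Schwarz producing $G^{ij}W_iW_j$. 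If some column $r$ has small tangential mass, a determinant argument shows all other columns have mass bounded below, and a further subcase analysis (on the sign of $\kappa_r$ and the size of $\kappa_n$ relative to $\kappa_r$) handles the missing index, using either $\frac{\partial F}{\partial\kappa_r}\leqslant C/\kappa_r$ or the smallness of $\epsilon+|\rho_\alpha|$ in $\Omega_\delta$. This case analysis, absent from your sketch, is where the actual work lies.
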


\begin{proof}
Note that
\begin{equation}
\label{GW}
\begin{aligned}
G^{ij}W_{ij} + G^s W_s = &\; \tilde f_\alpha + \rho_\alpha \tilde f_n - \sum_{\beta\leqslant n-1} u_{\beta} \tilde f_{\beta}
+ 2 G^{ij} u_{ni}\rho_{\alpha j}\\
&\; - \sum_{\beta\leqslant n-1}G^{ij} u_{\beta i} u_{\beta j} + u_n G^{ij}\rho_{\alpha ij} + u_n G^{s}\rho_{\alpha s}.
\end{aligned}
\end{equation}
It is easy to see that
\begin{equation}
\label{Gij}
G^{ij} = \frac{1}{w} \sum_{s,t} \tF^{st} \gamma^{si}\gamma^{jt}\;
\mbox{and}\;
u_{ij} = w \sum_{s,t} \gamma_{is} a_{st} \gamma_{tj}.
\end{equation}
It follows that
\[
\sum_{\beta\leqslant n-1}G^{ij} u_{\beta i} u_{\beta j}
= w \sum_{\beta \leqslant n-1} \sum_{s,t} \frac{\partial \tF}{\partial a_{ij}} \gamma_{\beta s} \gamma_{\beta t} a_{si} a_{tj}.
\]
Suppose $b_{ij}$ is the orthogonal matrix that diagonalize $a_{ij}$ and $\tF^{ij}$
simultaneously, i.e.
\begin{equation}
\label{Fij}
\tF^{ij} = \sum_s b_{is} \frac{\partial \tF}{\partial \kappa_s} b_{js} \;
\mbox{and}\;
a_{ij} = \sum_s b_{is} \kappa_s b_{js}.
\end{equation}
Therefore, we get
\[
\sum_{\beta\leqslant n-1}G^{ij} u_{\beta i} u_{\beta j}
= w\sum_{\beta \leqslant n-1} \sum_{i} \Big(\sum_s \gamma_{\beta s} b_{si}\Big)^2
\frac{\partial \tF}{\partial \kappa_i} \kappa_i^2.
\]
Define the matrix $\eta = (\eta_{ij})$ by
\[
\eta_{ij} = \sum_s \gamma_{is} b_{sj}.
\]
It is easy to verify that $\eta \cdot \eta^T = g$ and $|\det(\eta)| = \sqrt{1 + |Du|^2}$. Hence,
\begin{equation}
\label{G-f}
\sum_{\beta\leqslant n-1}G^{ij} u_{\beta i} u_{\beta j}
= w\sum_{\beta \leqslant n-1} \sum_{i} (\eta_{\beta i})^2
\frac{\partial \tF}{\partial \kappa_i} \kappa_i^2.
\end{equation}
By \eqref{Gij} and \eqref{Fij}, we obtain
\begin{equation}
\label{Fa}
 |G^{ij} u_{ni} \rho_{\alpha j}| \leqslant C \sum_i
\frac{\partial \tF}{\partial \kappa_i} |\kappa_i | \; \mbox{and}\;
\tF^{ij}a_{it} = \sum_{i} \frac{\partial \tF}{\partial \kappa_i} \kappa_i b_{ji}b_{ti}.
\end{equation}
By \eqref{GS-equality} and the above equality, we derive that
\[
G^s \rho_{\alpha s} \leqslant C \sum_i \frac{\partial \tF}{\partial \kappa_i} |\kappa_i |.
\]
A direct calculation shows that
\[
|\tilde f_{\alpha} + \tilde f_n\rho_\alpha - \sum_{\beta \leqslant n-1} u_\beta \tilde f_\beta| \leqslant C (1 + |DW|).
\]
We therefore arrive at
\begin{equation}
\label{G-W}
\begin{aligned}
G^{ij}W_{ij} \leqslant &\;C \Big(1 + |DW|+ \sum G^{ii}  \Big) - G^s W_s \\
&\; + C \sum_i \frac{\partial \tF}{\partial \kappa_i} |\kappa_i | - \sum_{\beta\leqslant n-1}G^{ij} u_{\beta i} u_{\beta j} .
\end{aligned}
\end{equation}

Next we estimate $G^s W_s$. By Lemma \ref{GS} and \eqref{Fa}, we see that
\begin{equation}
\label{GW-1}
\begin{aligned}
-G^s W_s
=&\; \frac{u_sW_s}{w^2} \tilde f
+ \frac{2}{w} \sum_{t,i} \frac{\partial \tF}{\partial \kappa_i} \kappa_i (b_{ti}u_t)(b_{ji} \gamma^{sj}) W_s\\
\leqslant &\; C |DW| + \frac{2}{w} \sum_{t,i} \frac{\partial \tF}{\partial \kappa_i} \kappa_i (b_{ti}u_t)(b_{ji} \gamma^{sj}) W_s.
\end{aligned}
\end{equation}
Now we divide the proof into two cases: (a) $ \sum_{\beta \leqslant n-1} \eta_{\beta i}^2 \geqslant \epsilon$
for all index $i$; (b) $ \sum_{\beta \leqslant n-1} \eta_{\beta r}^2 < \epsilon$ for some index $1\leqslant r \leqslant n$,
where $\epsilon > 0$ is a small constant to be determined later.

\textbf{Case (a)}. By our assumption and \eqref{G-f}, we have
\begin{equation}
\label{Guu}
\sum_{\beta\leqslant n-1}G^{ij} u_{\beta i} u_{\beta j}
\geqslant \epsilon \sum_i
\frac{\partial \tF}{\partial \kappa_i} \kappa_i^2.
\end{equation}
The second term in the right hand side of the inequality \eqref{GW-1} can be estimated as
\begin{equation}
\label{GW-2}
 \frac{\partial \tF}{\partial \kappa_i} \kappa_i (b_{ti}u_t)(b_{ji} \gamma^{sj}) W_s
\leqslant \frac{\epsilon}{4} \frac{\partial \tF}{\partial \kappa_i} \kappa_i^2
+ \frac{C}{\epsilon} \frac{\partial \tF}{\partial \kappa_i} (b_{ji} \gamma^{sj} W_s)^2
\end{equation}
by the Cauchy-Schwarz inequality. Note that
\begin{equation}
\label{GW-3}
\frac{\partial \tF}{\partial \kappa_i} (b_{ji} \gamma^{sj} W_s)^2 = G^{ij} W_i W_j.
\end{equation}
Combining \eqref{GW-3}, \eqref{GW-2} with \eqref{GW-1}, we obtain
\begin{equation}
\label{GW-4}
-G^s W_s \leqslant C (|DW|) + \frac{\epsilon}{2} \sum_i \frac{\partial \tF}{\partial \kappa_i} \kappa_i^2
+ \frac{C}{\epsilon} G^{ij} W_i W_j.
\end{equation}
Substituting \eqref{GW-4} and \eqref{Guu} into \eqref{G-W} and applying the Cauchy-Schwarz inequality to $\sum \frac{\partial \tF}{\partial \kappa_i} |\kappa_i|$, we get the desired
inequality \eqref{Ivo-inequality}.

\textbf{Case (b)}.
We may assume that the principal curvatures are ordered as $\kappa_1 \geqslant \kappa_2 \geqslant \cdots \geqslant \kappa_n$.
Then, we have
$\frac{\partial \tF}{\partial \kappa_1} \leqslant \frac{\partial \tF}{\partial \kappa_2} \leqslant \cdots \leqslant \frac{\partial \tF}{\partial \kappa_n}$.
Taking into account the fact that $|\eta_{nr}| \leqslant \sqrt{1 + |Du|_{C^0}^2}$,
we see that
\[
1 \leqslant |\det(\eta)| \leqslant \sqrt{1 + |Du|_{C^0}^2} | \det(\eta')| + C\epsilon,
\]
where $\eta' = \{\eta_{pq}\}_{p\neq n, q\neq r}$, which implies
\[
 | \det(\eta')| \geqslant \frac{1}{2\sqrt{1 + |Du|_{C^0}^2}}
\]
for sufficiently small $\epsilon$. On the other hand, for any fixed index $i \neq r$, it holds that
\[
|\det (\eta')| \leqslant C \sum_{\beta \neq n}|\eta_{\beta i}|
\]
for some positive constant $C$ depending on $n$ and $|Du|_{C^0}$. By the above two inequalities, we derive that,
for any $i\neq r$,
\[
\sum_{\beta \leqslant n-1} \eta_{\beta i}^2 \geqslant c_1
\]
for some positive constant $c_1$ depending on $n$ and $|Du|_{C^0}$.
Substituting the above inequality into \eqref{G-f}, we obtain
\begin{equation}
\label{G-f'}
\sum_{\beta\leqslant n-1}G^{ij} u_{\beta i} u_{\beta j}
\geqslant c_1 \sum_{i\neq r}
\frac{\partial \tF}{\partial \kappa_i} \kappa_i^2.
\end{equation}
If $\kappa_r \leqslant 0$, by the same argument as that of Lemma 2.20 in \cite{G} and
$\sum \frac{\partial \tF}{\partial \kappa_i} \kappa_i \geqslant 0$, we obtain
\[
\sum_{i\neq r}
\frac{\partial \tF}{\partial \kappa_i} \kappa_i^2 \geqslant \frac{1}{n} \sum_{i}
\frac{\partial \tF}{\partial \kappa_i} \kappa_i^2.
\]
We then get the desired
inequality \eqref{Ivo-inequality} by the same argument as Case (a) with $\epsilon$ in \eqref{Guu} replaced by $\frac{c_1}{n}$.
Therefore, we may assume $\kappa_r >0$.
Note that $\kappa_r \geqslant \kappa_n$.
It is easy to see that
\[
(b_{jr} \gamma^{sj}) W_s = w \Big(\eta_{\alpha r} + \rho_{\alpha} \eta_{nr} - \sum_{\beta\leq n-1} u_\beta \eta_{\beta r} \Big)\kappa_r
+ b_{jr}\gamma^{sj} \rho_{\alpha s} u_n,
\]
which implies that
\begin{equation}
\label{Ws}
(b_{jr} \gamma^{sj}) W_s \leq Cw (\epsilon + |\rho_\alpha|)\kappa_r + C.
\end{equation}

First, suppose that $\kappa_n \geqslant - \frac{\kappa_r}{2(p-1)}$.
We see
$\kappa_r + \kappa_{i_1} + \cdots + \kappa_{i_{p-1}} \geqslant \frac{\kappa_r}{2}$. It follows that
\begin{equation}
\label{F-r}
\begin{aligned}
\frac{\partial F}{\partial \kappa_r}
= &\; \sum_{r \notin\{i_1, \cdots, i_{p-1}\}} \frac{F(\kappa) }{\kappa_r + \kappa_{i_1} + \cdots + \kappa_{p-1}}
\leqslant \frac{C  }{ \kappa_r }.
\end{aligned}
\end{equation}
By \eqref{F-r} and the Cauchy-Schwarz inequality, we have
\[\begin{aligned}
&\; \sum_{t,i} \frac{\partial \tF}{\partial \kappa_i} \kappa_i (b_{ti}u_t)(b_{ji} \gamma^{sj}) W_s
\leqslant C |DW|
+ \frac{\epsilon}{4} \sum_{i\neq r} \frac{\partial \tF}{\partial \kappa_i} \kappa_i^2
+ \frac{C}{\epsilon} G^{ij} W_i W_j.
\end{aligned}\]
Substituting the above inequality into \eqref{GW-1}, we obtain
\begin{equation}
\label{GW-5}
-G^s W_s
\leqslant C |DW| + \frac{\epsilon}{4} \sum_{i\neq r} \frac{\partial \tF}{\partial \kappa_i} \kappa_i^2
+ \frac{C}{\epsilon} G^{ij} W_i W_j.
\end{equation}
By \eqref{GW-5}, \eqref{F-r}, \eqref{G-f'} and applying the Cauchy-Schwarz inequality to
$\sum_{i\neq r} \frac{\partial \tF}{\partial \kappa_i} |\kappa_i|$,
we conclude \eqref{Ivo-inequality} from \eqref{G-W}.

Now suppose that $\kappa_n \leqslant - \frac{\kappa_r}{2(p-1)}$.
Note that $r\neq n$ in this case.
Since $\frac{\partial \tF}{\partial \kappa_r} \kappa_r = \tilde f - \sum_{i\neq r} \frac{\partial \tF}{\partial \kappa_i} \kappa_i$,
we have, by \eqref{Ws} and the Cauchy-Schwarz inequality,
\[\begin{aligned}
&\; \sum_{t,i} \frac{\partial \tF}{\partial \kappa_i} \kappa_i (b_{ti}u_t)(b_{ji} \gamma^{sj}) W_s \\
\leqslant &\; C \frac{\partial \tF}{\partial \kappa_r} \kappa_r +  C (\epsilon + |\rho_\alpha|) \frac{\partial \tF}{\partial \kappa_r} \kappa_r^2
+ \frac{\epsilon}{4} \sum_{i\neq r} \frac{\partial \tF}{\partial \kappa_i} \kappa_i^2
+ \frac{C}{\epsilon} G^{ij} W_i W_j\\
\leqslant &\; C + \frac{C}{\epsilon}\sum_{i\neq r}\frac{\partial \tF}{\partial \kappa_i} + C (\epsilon + |\rho_\alpha|) \frac{\partial \tF}{\partial \kappa_n} \kappa_n^2
+ \frac{\epsilon}{2} \sum_{i\neq r} \frac{\partial \tF}{\partial \kappa_i} \kappa_i^2
+ \frac{C}{\epsilon} G^{ij} W_i W_j.
\end{aligned}\]
Combining \eqref{G-W}, \eqref{GW-1} and \eqref{G-f'} with the above inequality and
 choosing $\epsilon,\delta$ sufficiently small such that $C(\epsilon + \delta) < c_1$, we can get \eqref{Ivo-inequality}.
Thus, the proof of Lemma \ref{Ivo} is finished.

\end{proof}

With the above lemma in hand, we can construct a suitable barrier function to prove the following estimate
\begin{equation}
\label{tangential-normal}
|u_{\alpha n} (0)| \leqslant C, \; \mbox{for} \; 1 \leqslant \alpha \leqslant n-1.
\end{equation}
Since the boundary $\partial \Omega$ is smooth and strictly convex,
we can find a smooth strictly convex function $v$ satisfying $v = 0$ on $\partial \Omega$ and $v \leqslant 0$ in $\Omega$.
Therefore, we have $D^2 v(x) \in P_1$ for $x\in \ol \Omega$.
Moreover, we can find a small positive constant $\theta$ such that $\tilde v = v - \theta |x|^2$
satisfies $D^2 \tilde v (x) \in P_1$ for $x\in \ol \Omega$.
Now we can finish the proof of \eqref{tangential-normal}. Consider the following barrier function
\begin{equation}
\label{barrier}
\Psi = \tilde v + \frac{\theta}{2} |x|^2 - td + \frac{N}{2} d^2,
\end{equation}
where $t$ and $N$ are two positive constants to be chosen later.
A direct calculation shows that
\begin{equation}
\label{Gpsi}
\begin{aligned}
G^{ij}\Psi_{ij} 
= G^{ij} (D^2 \tilde v + N Dd\otimes Dd)_{ij} + G^{ij} (Nd-t) d_{ij} + \theta \sum G^{ii}.
\end{aligned}
\end{equation}
Concerning $D^2 \tilde v \in P_1$, we see that $D^2 \tilde v + N Dd\otimes Dd \in P_1$.
By the concavity and homogeneity of $G$ with respect to $D^2 u$, we see that
\begin{equation}
\label{GV}
G^{ij} (D^2 \tilde v + N Dd\otimes Dd)_{ij} \geq G( D^2 \tilde v + NDd\otimes Dd, Du) = \tilde F(V_u),
\end{equation}
where $V_u := \frac{1}{w} \{ \gamma^{ik} V_{kl} \gamma^{lj} \}$ and $V:=\{\tilde v_{kl} + N d_k d_l\}$.
We adopt the convention that the eigenvalues are arranged in algebraically nondecreasing order:
\[
 \lambda_1 \leqslant \lambda_2 \leqslant \cdots \leqslant \lambda_n.
\]
Then, by the Weyl theorem, it is easy to see that $\lambda_\alpha (V) \geqslant \lambda_\alpha (D^2 \tilde v)$
for $\alpha = 1, 2, \cdots, n-1$
and $\lambda_n (V) \geqslant \lambda_1 (D^2 \tilde v ) + N$.
Denote $\theta_0 = \inf_{x\in \Omega} \frac{\lambda_1 (D^2 \tilde v)}{\lambda_n (D^2 \tilde v)}$.
We further derive that
$\lambda_\alpha (V) \geqslant \theta_0 \lambda_\alpha (D^2 \tilde v)$ for $\alpha = 1, 2, \cdots, n-1$
and $\lambda_n (V) \geqslant \theta_0 (\lambda_n (D^2 \tilde v ) + N)$.
On the other hand, by the Ostrowski Theorem, there exist $\theta_k > 0$ such that
$\lambda_k (V_u) = \theta_k \lambda_k (V)$ for $k =1,2, \cdots, n$,
where $\theta_k$ is uniformly positive only depending on $|Du|_{C^0}$.
Substituting \eqref{GV} into \eqref{Gpsi} and by Lemma \ref{growth}, we have, for sufficiently large $N$ and sufficiently small $\delta$ and $t$,
\[
G^{ij}\Psi_{ij} \geqslant \bar\theta N^{\frac{p}{n}} + \frac{\theta}{2} \sum G^{ii}
\geqslant N^{\frac{p-1}{n}} + \frac{\theta}{2} \sum G^{ii},
\]
where $\bar\theta$ depends on $\theta_0, \theta_1, \cdots \theta_n$ and $\lambda( D^2 \tilde v)$.
Also, note that
$| D\Psi | = |D \tilde v + \theta x - tDd + N d Dd| \leqslant C + N \delta$
in $\Omega_{\delta}$. Choosing $\delta $ sufficiently small, we obtain
\[
G^{ij}\Psi_{ij} \geqslant N^{\frac{1}{n}} |D\Psi| + \frac{\theta}{2} \Big( \sum G^{ii} + 1\Big).
\]

Let
$\tilde W = 1 - e^{-B W}$, where constant $B>0$ is sufficiently large.
It follows that
\begin{equation}
\label{MP-critical-point}
G^{ij} (R \Psi - \tilde W)_{ij} \geqslant \sqrt[n]{N} (|DR\Psi| - |D \tilde W|)
\end{equation}
if we choose $R \gg B \gg 1$.
Since $\Psi|_{\partial \Omega_\delta \cap \partial \Omega} \leqslant -\frac{\theta}{2}|x'|^2$,
$\Psi|_{\partial \Omega_{\delta} \cap \Omega} \leqslant -\frac{\theta}{2}\delta^2$ and
$-W|_{\partial \Omega} = \frac{1}{2} \sum_\beta u_\beta^2 = \frac{1}{2} \sum_\beta \rho_\beta^2 u_n^2 \leqslant C|x'|^2$
for some $C$ depending on $|\rho|_{C^2}$ and $|u|_{C^1}$,
it is easy to verify that on $\partial \Omega_\delta$ we have
$R\Psi - \tilde W \leqslant 0$ if $R$ is large enough.
By the maximum principle we get
\[
R\Psi - \tilde W \leqslant 0 \;\mbox{in}\; \Omega_\delta.
\]
As $(R\Psi - \tilde W) (0) = 0$ we deduce that
$u_{\alpha n} (0) \geqslant -C$.
Consider $W' = - u_{\alpha} - \rho_{\alpha} u_n - \frac{1}{2} \sum_{1\leqslant \beta \leqslant n-1} u_{\beta}^2$. By the same argument we deduce
$u_{\alpha n} (0) \leqslant C$.
The proof of \eqref{tangential-normal} is completed.

Finally, we prove the double normal estimate
\begin{equation}
\label{doublenormal}
|u_{nn}(0)| \leqslant C.
\end{equation}
We only need to derive an upper bound for $u_{nn}(0)$ by $\sum_i \kappa_i > 0$ since $P_{p} \subset P_{n}$.
Actually, by the inequality of arithmetic and geometric means, we have
$\frac{ p \sum_{i} \kappa_i }{n} \geqslant \tF (\kappa)\geqslant \inf \tilde f > 0$.
By (1.8) in \cite{CNS5}, i.e. $ad(x) \leqslant -u(x) \leqslant \frac{1}{a} d(x)$ in $\Omega$
for some constant $a>0$ depending on $\inf \tilde f$, we obtain
\[
u_n (0) \leqslant -a.
\]
We have, with respect to a principal coordinate system at the origin, $u_{\alpha\beta} (0) = -u_n \kappa_\alpha^b \delta_{\alpha\beta}$ for $1\leqslant \alpha, \beta \leqslant n-1$
and $g^{ij} = \delta_{ij} - \frac{|Du|^2}{w^2} \delta_{in}\delta_{jn}$.
Therefore, the matrix in \eqref{matrix} has the following form
\[
\left( \begin{array}{ccccc}
-u_n \kappa_1^b & 0 & \cdots & 0& \frac{u_{1n}}{w} \\
0 & -u_n \kappa_2^b & \cdots & 0& \frac{u_{2n}}{w} \\
\vdots & \vdots &  \ddots & \vdots & \vdots\\
0& 0 & \cdots & -u_n \kappa_{n-1}^b & \frac{u_{n-1 n}}{w} \\
\frac{u_{n1}}{w} & \frac{u_{n2}}{w} & \cdots & \frac{u_{n n-1}}{w} & \frac{u_{nn}}{w^2} \end{array} \right).
\]
By Lemma 1.2 of \cite{CNS3}, when $|u_{nn}|$ goes to infinity, the eigenvalues $w(\kappa_1, \cdots, \kappa_n)$ of the above matrix behave like
\[
w \kappa_\alpha = -u_n\kappa_\alpha^b + o(1),\; 1 \leqslant \alpha\leqslant n-1, \; w\kappa_n = \frac{u_{nn}}{w^2}  + O(1),
\]
where $o(1)$ and $O(1)$ are uniform only depending on $-u_n\kappa^b_\alpha$ and $|u_{n\alpha} (0)|$.
Since $-u_n(0)\geqslant a$ and $(\kappa_1^b, \cdots, \kappa_{n-1}^b) \in \mathcal{P}_{p}$, there exists a uniform positive constant $\varsigma$
such that
\[
\Pi_{1\leqslant i_1 < \cdots < i_{p} \leqslant n-1} (\kappa_{i_1} + \cdots + \kappa_{i_{p}}) \geqslant \varsigma,
\]
as long as $u_{nn}$ is large enough.
Returning to the equation \eqref{eqn}, we have
\[
 \Pi_{1\leqslant i_1 < \cdots < i_{p-1} \leqslant n-1}( \kappa_n + \kappa_{i_1} + \cdots + \kappa_{i_{p-1}}) \leqslant \frac{C}{\varsigma},
\]
from which we can derive an upper bound for $\kappa_n$. Therefore, $u_{nn} (0) $ is bounded from above.
Combining \eqref{C2tangential}, \eqref{tangential-normal} with \eqref{doublenormal}, we obtain \eqref{C2b}.

\section{Gradient Estimates}

In this section, we establish the gradient estimate. First,
by the existence of a subsolution $\underline{u}$ and the maximum principle, it is easy to derive
\begin{equation}
\label{C0-C1b}
\sup_{\Omega} |u| + \sup_{\partial \Omega} |Du| \leqslant C,
\end{equation}
where $C$ depends on $|\underline u|_{C^1}$.
Next, we prove the global gradient estimate following the argument in \cite{CNS5}. We have
\begin{theorem}
Let $u \in C^3(\Omega)\cap C^1 (\bar \Omega)$ be an admissible solution to the problem \eqref{eqn} and \eqref{eqn-b}.
Suppose $f (x,z,\nu) \in C^{\infty} (\bar \Omega \times \mathbb{R} \times \mathbb{S}^n) > 0$ and $f_z \geqslant 0.$
Then, the estimate
\begin{equation}
\label{gradient}
\sup_{\Omega} |Du| \leqslant C (1 + \sup_{\partial \Omega} |Du|)
\end{equation}
holds for a positive constant $C$ depending on $n$, $p$, $|u|_{C^0}$, $\inf f$ and $|f|_{C^1}$.
\end{theorem}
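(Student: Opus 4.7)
The plan is to apply the maximum principle to the auxiliary function
\[
\Phi = \log w + A(u - \ul u)
\]
on $\bar\Omega$, where $w = \sqrt{1+|Du|^2}$ and $A > 0$ is a large constant to be fixed later. Since $u = \ul u = 0$ on $\partial\Omega$, we have $\Phi = \log w$ on $\partial\Omega$, so if $\Phi$ attains its maximum at a boundary point then $\sup_\Omega w \leq e^{A(|u|_{C^0} + |\ul u|_{C^0})}\sup_{\partial\Omega} w$, which yields \eqref{gradient}. It therefore suffices to rule out an interior maximum $x_0 \in \Omega$ with $w(x_0)$ arbitrarily large.

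At such a point $x_0$ the first-order condition reads $w_i/w = -A(u_i - \ul u_i)$. Contracting the non-positive Hessian of $\Phi$ with the elliptic operator $G^{ij}$, and inserting the identity $w_{ij} = u_{ki}u_{kj}/w + u_k u_{kij}/w - w_i w_j/w$, one arrives at an inequality of the form
\[
0 \geq \frac{G^{ij} u_{ki} u_{kj}}{w^2} - \frac{2 G^{ij} w_i w_j}{w^2} + \frac{u_k G^{ij} u_{ijk}}{w^2} + A G^{ij}(u_{ij} - \ul u_{ij}).
\]
The third-order term is then handled by differentiating the equation $G(D^2u, Du) = \tilde f(x, u, \nu)$ in direction $x_k$: the resulting $\tilde f_z |Du|^2/w^2$ contribution has a favorable sign thanks to the hypothesis $f_z \geq 0$ and can be discarded, the $\tilde f_{\nu_l}(\nu_l)_{x_k}$ contribution collapses into a bounded multiple of $A$ after substituting $u_k u_{lk} = w w_l$ and invoking the critical equation, and the remaining $\tilde f_{x_k}$ and $G^s u_{sk}$ pieces are controlled by $|f|_{C^1}$ together with $\sum G^{ii}$. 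The negative quadratic $G^{ij} w_i w_j$ is absorbed by $G^{ij} u_{ki} u_{kj}$ via the Cauchy-Schwarz inequality, since $w_i = u_k u_{ki}/w$ forces $G^{ij} w_i w_j \leq (|Du|^2/w^2) G^{ij} u_{ki} u_{kj}$.

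For the last term I would use the concavity of $G$ in $D^2u$ from \cite{Dinew} combined with the subsolution inequality \eqref{subsolution} and the monotonicity $f_z \geq 0$ to deduce a lower bound of the form
\[
A G^{ij}(u_{ij} - \ul u_{ij}) \geq c_0 A - CA\sum G^{ii}
\]
with $c_0 > 0$ depending on $\inf f$ and $|\ul u|_{C^2}$. Combining all these ingredients and using Lemma \ref{Dinew1}(2) together with the comparison $\sum G^{ii} \leq w^{-1}\sum \tilde F^{ii}$ to express the $\sum G^{ii}$ factors in terms of $w(x_0)$, the resulting inequality becomes contradictory once $A$ is chosen sufficiently large in terms of $n$, $p$, $|u|_{C^0}$, $\inf f$, $|f|_{C^1}$ and $|\ul u|_{C^2}$, forcing $w(x_0)$ to be bounded.

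The main obstacle is the $\nu$-dependence of $\tilde f$: differentiating the equation produces the term $\tilde f_{\nu_l}(\nu_l)_{x_k}$, which a priori contains second derivatives of $u$ through $(\nu_l)_{x_k} = -u_{lk}/w + u_l u_m u_{mk}/w^3$, and these must be eliminated via the first-order critical equation before any second-derivative information is exploited. The hypothesis $f_z \geq 0$ is essential, because otherwise the $\tilde f_z u_k$ contribution would yield a term of size $|Du|^2$ that could not be balanced by the positive piece coming from $AG^{ij}(u_{ij} - \ul u_{ij})$.
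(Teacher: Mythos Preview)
Your test function is essentially the paper's (which uses $Q=|Du|\,e^{Au}$), but the way you organize the estimate misses the actual source of positivity, and the final contradiction does not close.

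The central gap is your handling of $u_k G^s u_{sk}/w^2$. You dismiss it as ``controlled by $|f|_{C^1}$ together with $\sum G^{ii}$'', but this term carries the decisive positive contribution. In coordinates with $u_1=|Du|$, $u_\alpha=0$ for $\alpha\ge 2$, and $u_{ij}$ diagonal (which the critical equation permits), Lemma~\ref{GS} gives
\[
G^s u_s=-\frac{|Du|^2}{w^2}\tilde f-\frac{2}{w^2}\tilde F^{11}a_{11}u_1^2 .
\]
The critical equation forces $u_{11}=-Au_1^2$, hence $a_{11}=u_{11}/w^3<0$; multiplying by $A$ therefore produces a positive term of order $A^2\tilde F^{11}u_1^4/w^5\sim A^2/u_1$. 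It is this term, not your $c_0A$, that beats the error $-CA/u_1$ for large $A$, using Lemma~\ref{Dinew1}(4) and (2) to bound $\tilde F^{11}$ below (since $a_{11}<0$ is among the smallest eigenvalues). Note also that the first piece $-\frac{|Du|^2}{w^2}\tilde f$, after multiplication by $A$, cancels your $A\tilde f$ up to $A\tilde f/w^2$, so the ``subsolution'' positivity $c_0A$ is not available at leading order.

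Relatedly, the absorption of $2G^{ij}w_iw_j/w^2$ by $G^{ij}u_{ki}u_{kj}/w^2$ fails: Cauchy--Schwarz gives a factor $|Du|^2/w^2$, so the combined coefficient is $1-2|Du|^2/w^2<0$ once $|Du|>1$. In the correct argument this negative piece is exactly offset by the $G^s$-contribution above; neither term can be discarded on its own.

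Two further points. First, the theorem does not assume a subsolution, and the stated constant does not depend on $|\ul u|_{C^2}$; inserting $A(u-\ul u)$ both adds that dependence and generates the term $-CA\sum G^{ii}$, which Lemma~\ref{Dinew1}(2) does \emph{not} bound from above (it gives only a lower bound for $\sum\tilde F^{ii}$). Using $Au$ alone, as the paper does, avoids this entirely. Second, the ``concavity plus subsolution'' step is delicate because $G(D^2\ul u,Du)$ evaluates $\tilde F$ at $\frac{1}{w}\gamma^{ik}(Du)\,\ul u_{kl}\,\gamma^{lj}(Du)$, whose membership in $P_p$ is not the admissibility of $\ul u$; the clean route is simply the homogeneity identity $G^{ij}u_{ij}=\tilde f$.
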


\begin{proof}

As in \cite{CNS5}, we consider the following quantity
\[
Q = |Du|e^{Au},
\]
where $A$ is a large constant to be determined later.
Suppose that $Q$ achieves its maximum at $x_0 \in \Omega$. At this point we may rotate the coordinates such that
\[
u_1 = |Du| > 0\; \mbox{ and}\; u_{\alpha} = 0 \;\mbox{for}\; \alpha \geqslant 2.
\]
Differentiating $\log Q$ at $x_0$ once, we obtain
\begin{equation}
\label{D1Q}
\frac{u_{1i}}{u_1} + A u_i = 0\; \mbox{for} \; i= 1, \cdots, n.
\end{equation}
By the above equality, we may further rotate the coordinates so that $u_{ij}$ is diagonal at $x_0$.
From \eqref{matrix}, we find that
\[
a_{11} = \frac{u_{11}}{w^3}, \; a_{ii} = \frac{u_{ii}}{w} \; \mbox{for} \; i\geqslant 2 \; \mbox{and}\; a_{ij} = 0 \;\mbox{for}\; i\neq j .
\]
Denote by $\alpha_{ij}$ the positive semidefinite matrix defined as
\[
\alpha_{11} = \frac{1}{w^3}, \; \alpha_{1i} = \frac{1}{w^2},\; i \geqslant 2, \; \alpha_{ij} = \frac{1}{w},\; j\geqslant i \geqslant 2.
\]
As in \cite{Wang}, denote by $\hat F^{ij} = F^{ij}\alpha_{ij}$ the Hadamard product of the matrix $F^{ij}$ and $\alpha_{ij}$, which is positive definite since $F^{ij}$ is diagonal and positive definite.
Differentiating $\log Q$ at $x_0$ a second time and contracting with $\hat F^{ij}$, we get
\begin{equation}
\label{D2Q}
0 \geqslant \frac{\hat F^{ij}u_{1ij}}{u_1} - \frac{\hat F^{ij } u_{1i} u_{1j}}{u_1^2} + A \hat F^{ij} u_{ij}.
\end{equation}
Differentiating $a_{ij}$ and the equation \eqref{eqn}, we obtain
\begin{equation}
\label{da}
F^{ij} a_{ij1} = \hat F^{ij} u_{ij1} - \frac{2F^{11} u_{11}^2 u_1}{w^5} - \frac{u_{11} u_1}{w^2}  F^{ij}a_{ij} = (f)_1.
\end{equation}
Note that
$(f)_1 = f_{x_1} + f_u u_1 + f_{\nu_j}(\nu_j)_1$.
By \eqref{D1Q} and $f_u \geqslant 0$, we have
\begin{equation}
\label{df}
\frac{(f)_1}{u_1} \geqslant -\frac{C}{u_1} + \frac{u_{11}f_{\nu_1}}{u_1 w} \Big(\frac{u_1^2}{w^2} - 1\Big) - f_{\nu_{n+1}} \frac{ u_{11}}{w^3} \geqslant -\frac{CA}{u_1},
\end{equation}
where $C$ depends on $|Df|_{C^0}\equiv \sup_{\bar\Omega\times[-C_0, C_0]\times \mathbb{S}^n} |Df|$.
Substituting \eqref{da} and \eqref{df} into \eqref{D2Q}, we have
\begin{equation}
\label{gradient-1}
\begin{aligned}
0 \geqslant&\; \frac{2F^{11} u_{11}^2 }{w^5} + \frac{u_{11} }{w^2} F^{ij}a_{ij} - \frac{CA}{u_1} - \frac{\hat F^{11 } u_{11}^2}{u_1^2}
+ A \hat F^{ij} u_{ij}\\
\geqslant &\; (\frac{2u_1^2}{w^2} - 1) \frac{F^{11} u_{11}^2 }{w^3 u_1^2 }  + (1 - \frac{ u_1^2 }{w^2} ) A C_n^p f - \frac{CA}{u_1},
\end{aligned}
\end{equation}
where in the last inequality we used $F^{ij}a_{ij} = \hat F^{ij} u_{ij} = C_n^p f$.
From \eqref{D1Q}, we see $u_{11} = - A u_1^2 < 0$.
By Lemma \ref{Dinew1} (4) and (2), there exists a positive constant $c_2$ depending on $n$, $p$ and $\inf f$ such that
\begin{equation}
\label{gradient-3}
0 \geqslant  \frac{c_2 A^2 u_{1}^4 }{2w^5} - \frac{CA}{u_1},
\end{equation}
which is a contradiction to large $A$ and $|Du|$.
Therefore, $Q$ attains its maximum on $\partial \Omega$ and hence \eqref{gradient} is proved.

\end{proof}

\begin{remark}
The interior gradient estimate was proved by Li \cite{L} for Weingarten curvature equations of general type by
the normal perturbation method.
\end{remark}

\end{document}